\definecolor{violet}{rgb}{0.0,0.2,0.7}
\definecolor{rouge}{cmyk}{0.0,0.6,0.4,0.3}
\definecolor{rouge2}{rgb}{0.8,0.0,0.2}
\newcommand{\R}{\mathbb{R}}
\newcommand{\CC}{\mathbb{C}}
\renewcommand{\d}{\partial}
\newcommand{\vp}{\varphi}
\renewcommand{\O}{\mathcal{O}}
\newcommand{\Ox}{\mathcal{O}_{X}}
\newcommand{\ep}{\varepsilon}
\newcommand{\la}{\langle}
\newcommand{\ra}{\rangle}
\renewcommand{\ge}{\geqslant}
\renewcommand{\le}{\leqslant}
\newcommand{\om}{\omega}
\newcommand{\D}{D}
\newcommand{\iddb}{dd^c}
\newcommand{\ddc}{dd^c}
\newcommand{\Fe}{F_{\varepsilon}}
\newcommand{\Supp}{\mathrm{Supp}}
\newcommand{\vpe}{\varphi_{\varepsilon}}
\newcommand{\pse}{\psi_{\varepsilon}}
\newcommand{\fe}{F_{\varepsilon}}
\newcommand{\epp}{(\varepsilon^2+|z^p|^2e^{-\varphi_p})}
\newcommand{\epq}{(\varepsilon^2+|z^q|^2e^{-\varphi_q})}
\newcommand{\fepp}{(\varepsilon^2+|z^p|^2)}
\newcommand{\fepq}{(\varepsilon^2+|z^q|^2)}
\newcommand{\fepr}{(\varepsilon^2+|z^r|^2)}
\newcommand{\feps}{(\varepsilon^2+|z^s|^2)}
\newcommand{\fepu}{(\varepsilon^2+|z^u|^2)}
\newcommand{\fept}{(\varepsilon^2+|z^t|^2)}
\newcommand{\dej}{\tau_j}
\newcommand{\dep}{\tau_p}
\renewcommand{\b}{\bar}
\renewcommand{\om}{\omega}
\newcommand{\omi}{\omega_{\infty}}
\newcommand{\ome}{\omega_{\varepsilon}}
\newcommand{\tr}{\mathrm{tr}}
\newcommand{\Ric}{\mathrm{Ric} \,}
\def\dbar{\overline\partial}
\def\ddbar{\partial\overline\partial}
\def\cO{{\mathcal O}}
\def\cR{{\mathcal R}}
\def\cB{{\mathcal B}}
\def\cC{{\mathscr C}}
\let\ol=\overline
\def\bC{{\mathbb C}}
\def\bR{{\mathbb R}}
\newtheorem*{thma}{Theorem A}
\newtheorem*{thmb}{Theorem B}
\newtheorem*{thmc}{Theorem C}
\begin{document}
\title[Metrics with cone singularities along normal crossing divisors]{Metrics with cone singularities along normal crossing divisors and holomorphic tensor fields}
\date{\today}
\author{Fr\'ed\'eric Campana} 
\address{Institut Elie Cartan \\
Universit\'e Henri Poincar\'e\\
B. P. 70239, F-54506 Vandoeuvre-l\`es-Nancy Cedex, France}
\email{Frederic.Campana@iecn.u-nancy.fr}

\author{Henri Guenancia}
\address{Institut de Mathématiques de Jussieu \\
Université Pierre et Marie Curie \\
 Paris}
\email{guenancia@math.jussieu.fr}
\urladdr{www.math.jussieu.fr/~guenancia}

\author{Mihai P\u aun}
\address{Institut Elie Cartan \\
Universit\'e Henri Poincar\'e\\
B. P. 70239, F-54506 Vandoeuvre-l\`es-Nancy Cedex, France\\
and Korea Institute for Advanced Study, Korea}
\email{Mihai.Paun@iecn.u-nancy.fr}

\thanks{It is a pleasure to thank S. Boucksom and V. Tosatti: their remarks and comments 
helped us to improve the content and the presentation of the present article.}

\begin{abstract} 
We prove the existence of non-positively curved Kähler-Einstein metrics with cone singularities along a given simple normal crossing divisor of a compact Kähler manifold, under a technical condition on the cone angles, and we also discuss the case of positively-curved Kähler-Einstein metrics with cone singularities. As an application we extend to this setting classical results of Lichnerowicz and Kobayashi on the parallelism and vanishing of appropriate holomorphic tensor fields.
\end{abstract}

\begin{altabstract} 
Dans cet article, nous prouvons l'existence de métriques de Kähler-Einstein à courbure négative ayant des singularités coniques le long d'un diviseur à croisements normaux simples sur une variété kählerienne compacte, sous une hypothèse technique sur les angles des cones. Nous discutons également du cas des métriques de Kähler-Einstein à courbure strictement positive avec des singularités coniques. Nous en déduisons que les résultats classiques de Lichnerowicz et Kobayashi sur le parallélisme et l'annulation des champs de tenseurs holomorphes s'étendent à  notre cadre. 
\end{altabstract}

\maketitle

\tableofcontents

\section{Introduction}

Let $X$ be a $n$-dimensional compact Kähler manifold, and let $D= \sum a_i D_i$ be an effective $\R$-divisor with strictly normal crossing support, such that for all $i$, $0<a_i<1$. In the terminology of the Minimal Model Program, the pair $(X, D)$ is called a \textit{log-smooth klt pair}; following \cite{Camp2}, \cite{Camp1}, we may also call it a \emph{smooth geometric orbifold}. \\

One may define for such a pair the notion of \textit{cone metric}, or also \textit{metric with cone singularities along $D$}: it corresponds to an equivalence class (up to quasi-isometry) of (Kähler) metrics $\omega_{\rm cone}$ on $X_0= X \setminus \mathrm{Supp}(D)$
having the following property: there exists $C>0$ such that for every point $p \in \mathrm{Supp}(D)$ such that in  $\mathrm{Supp}(D) \, \cap \, \Omega = (z^1\cdots z^d =0)$, where we denote by $\Omega$  
a coordinate neighborhood of $p$, we have: 
\[C^{-1} \omega_{\textrm{o}} \le \omega_{\rm cone} \le C \,\omega_{\textrm{o}}\]
where 
\[ \omega_{\textrm{o}} = \sqrt{-1}\left(\sum_{i=1}^d   \frac{dz^i\wedge d\bar z^{i}}{|z^i|^{2a_i}} +\sum_{i=d+1}^n dz^i\wedge d\bar z^{i}\right)\]
is the standard cone metric on $\CC^n\setminus \mathrm{Supp}(D)$ with respect to the divisor 
\[D=\sum_{i=1}^d a_i [z^i=0].\]

In the context of geometric orbifolds the notion of \emph{holomorphic tensors} was first formulated in \cite{Camp2}, \cite{Camp1}; as we will see here, it corresponds to holomorphic tensors on $X_0= X \setminus \mathrm{Supp}(D)$ (in the usual sense) which are bounded with respect to one cone metric along $D$. Roughly speaking, the results of this paper consist in constructing cone metrics with prescribed Ricci curvature by means of Monge-Ampère equations, and use them to show the vanishing or parallelism of some holomorphic tensors on $(X,D)$ according to the sign of the adjoint bundle $K_X+D$.\\\\

We offer next a more extensive presentation of the main theorems obtained in this article. In the following, we fix a log-smooth klt pair $(X,D)$ where we choose to write $D$ in the form
\[D:= \sum_{j\in J}(1-\tau_j)Y_j\]
for some smooth hypersurfaces $Y_j$ having strictly normal intersections, and some real numbers $0< \tau_j < 1$. The numbers $\tau_j$ (or more precisely $2\pi \tau_j$) have a geometric interpretation in terms of the cone angles.
As we said, our goal is to construct a K\"ahler metric $\omega_{\infty}$ on $X_0:=X\setminus (\cup Y_j)$ whose Ricci curvature is 
given according to the sign of the $\bR$-divisor $K_X+ D$, 
with prescribed asymptotic along $D$. For example, let us consider the case where $K_X+D$ is ample. Then we want $\omega_{\infty}$ to satisfy the two following properties: \\
\begin{enumerate}
\item[$\bullet$] $\omega_{\infty}$ is Kähler-Einstein on $X_0$: $-\Ric(\omega_{\infty})= \omega_{\infty} \, \,  \textrm{on} \,\, X_0$
\item[$\bullet$] $\omega_{\infty}$ has cone singularities along $D$.\\
\end{enumerate}

This problem, in a more general form not involving positivity on the adjoint bundle $K_X+D$, can be restated in the Monge-Ampère setting. Namely, given a Kähler form $\omega$ on $X$, we want to solve the following equation (in $\vp$):
\[(\om+ \iddb \vp)^n = e^{f+\lambda \vp} \mu_D \leqno{(\mathrm{MA})}\]
for $\lambda \in \{0, 1\}$, $f\in \cC^{\infty}(X)$, and where $\mu_D$ is the volume form on $X$ given by
\[\mu_D = \frac{\om^n}{\prod_{j\in J} |s_j|^{2(1-\tau_j)}}\]
for sections $s_j$ of $(L_j, h_j)$ defining $Y_j$; moreover, if $\lambda=0$, one assumes that $\int_X e^f \mu_D=\int_X \om^n$. Here $d= \partial+ \overline{\partial}$ and $d^c= \frac{1}{2i\pi}(\partial - \overline{\partial})$.\\

When $\lambda=1$, using an elementary regularization argument, one can construct a (unique) continuous solution $\vp$ of equation (MA), cf. section \ref{subsec:est0}. The case where $\lambda=0$ is deeper: as $\mu_D$ as $L^p$ density for some $p>1$ (this is an important place where the klt condition is used), a theorem of S. Ko\l odziej \cite{Kolo} shows that this equation has a unique (normalized) solution $\vp_{\infty}$, which is continuous on $X$. By \cite[Theorem B]{EGZ} (see also \cite{Paun}), $\vp_{\infty}$ is known to be smooth outside $\mathrm{Supp}(D)$.\\

The metric $\om_{\infty}=\om + \iddb \vp_{\infty}$ on $X_0=X\setminus(\cup Y_j)$ satisfies the condition stated in the first item. 
Unfortunately, the theorem of S. Ko\l odziej or their generalizations don't give us order $2$ information on $\vp_{\infty}$ near $\mathrm{Supp}(D)$, which is exactly what the condition in the second item ("cone singularities") requires. \\

\noindent Our first main result is the following:  

\begin{thma}[Main Theorem]
We assume that the coefficients of $D$ satisfy the inequalities
\[ 0< \tau_j\le \frac{1}{2}.\] Then the Kähler metric $\omega_{\infty}$ solution to the following equation 
\[(\om+ \iddb \vp)^n =  \frac{e^{f+\lambda \vp}}{\prod_{j\in J} |s_j|^{2(1-\tau_j)}} \, \om^n \leqno{(\mathrm{MA})}\]
 has cone singularities along $D$.
 \end{thma}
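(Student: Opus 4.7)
\medskip

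\textbf{Overall strategy.} The plan is to approximate the degenerate Monge--Amp\`ere equation $(\mathrm{MA})$ by a family of smooth equations indexed by a parameter $\varepsilon>0$, then establish a uniform two-sided bound between the solutions $\omega_\varepsilon$ and a regularized model cone metric via a Chern--Lu / Aubin--Yau Laplacian estimate, and finally pass to the limit $\varepsilon\to 0$ on $X_0$. The cone angle condition $\tau_j\le\tfrac12$ is expected to appear only in the curvature step.

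\textbf{Step 1: regularization.} Replace the singular density by $\prod_j(\varepsilon^2+|s_j|^2)^{-(1-\tau_j)}$ so that the right-hand side of $(\mathrm{MA}_\varepsilon)$ is smooth and positive. By Aubin--Yau (in the case $\lambda=1$, using that $K_X+D$ is ample modulo a small perturbation, or directly by a continuity method when $\lambda=0$), one gets smooth K\"ahler potentials $\varphi_\varepsilon$ with $\omega_\varepsilon:=\omega+\iddb\varphi_\varepsilon>0$. Simultaneously, introduce a regularized model $\widehat\omega_\varepsilon$ quasi-isometric to $\omega_o$ on $X_0$ (e.g.\ locally $\iddb\sum_i(\varepsilon^2+|z^i|^2)^{\tau_i}$ plus a standard term).

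\textbf{Step 2: uniform $C^0$-estimate.} Because $\prod_j|s_j|^{-2(1-\tau_j)}\in L^p$ for some $p>1$ (the klt condition), Ko\l odziej's estimate applied uniformly in $\varepsilon$ gives $\|\varphi_\varepsilon\|_{L^\infty(X)}\le M$ and $\varphi_\varepsilon\to\varphi_\infty$ uniformly on $X$. Smooth convergence on compact subsets of $X_0$ then follows from standard local elliptic theory once we have the Laplacian bound below.

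\textbf{Step 3: Laplacian estimate (the heart of the proof).} Apply the Chern--Lu inequality to $\mathrm{tr}_{\omega_\varepsilon}\widehat\omega_\varepsilon$:
\[
\Delta_{\omega_\varepsilon}\log\mathrm{tr}_{\omega_\varepsilon}\widehat\omega_\varepsilon \;\ge\; \frac{\mathrm{tr}_{\omega_\varepsilon}\bigl(\Ricci(\omega_\varepsilon)\bigr)}{\mathrm{tr}_{\omega_\varepsilon}\widehat\omega_\varepsilon} \;-\; B_\varepsilon\cdot \mathrm{tr}_{\omega_\varepsilon}\widehat\omega_\varepsilon,
\]
where $B_\varepsilon$ dominates the holomorphic bisectional curvature of $\widehat\omega_\varepsilon$ from above. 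The Ricci term is controlled by differentiating $(\mathrm{MA}_\varepsilon)$ which, by construction, yields $\Ricci(\omega_\varepsilon) \ge -\lambda\omega_\varepsilon + \sum_j(1-\tau_j)\Theta_\varepsilon(L_j) - \iddb f$ where $\Theta_\varepsilon(L_j)$ is uniformly bounded. Thus a standard maximum-principle argument on $\log\mathrm{tr}_{\omega_\varepsilon}\widehat\omega_\varepsilon - A\varphi_\varepsilon$ (for $A$ large, exploiting the $C^0$-bound) gives $\mathrm{tr}_{\omega_\varepsilon}\widehat\omega_\varepsilon\le C$, provided $B_\varepsilon$ is bounded above independently of $\varepsilon$.

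\textbf{Step 4: the reverse bound and passage to the limit.} Once $\omega_\varepsilon\ge C^{-1}\widehat\omega_\varepsilon$ is known, the Monge--Amp\`ere equation $(\mathrm{MA}_\varepsilon)$ --- whose right-hand side is uniformly comparable to $\widehat\omega_\varepsilon^{\,n}$ --- forces $\omega_\varepsilon\le C\widehat\omega_\varepsilon$ by elementary linear algebra on the eigenvalues. Letting $\varepsilon\to 0$ yields $C^{-1}\omega_o\le\omega_\infty\le C\omega_o$ on a neighborhood of each point of $\mathrm{Supp}(D)$, which is exactly the cone singularity condition.

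\textbf{The main obstacle.} The core technical point is Step~3: establishing a \emph{uniform} upper bound on the holomorphic bisectional curvature of $\widehat\omega_\varepsilon$ near $\mathrm{Supp}(D)$. A direct computation on the regularized one-variable model $\iddb(\varepsilon^2+|z|^2)^\tau$ shows that the bisectional curvature along the complex line $\{z\ne 0\}$ has sign governed by $(1-2\tau)$; hence it remains bounded above (in fact non-positive in the critical tangential direction) precisely when $\tau\le\tfrac12$, and blows up like $|z|^{-2(2\tau-1)}$ otherwise. The main work is therefore the careful curvature computation in the mixed directions (tangential to some components $Y_j$, transverse to others), showing that all dangerous terms are absorbed under the hypothesis $\tau_j\le\tfrac12$. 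Once this curvature bound is in place, the maximum-principle scheme of Step~3 proceeds along the lines of Aubin--Yau, with some additional care needed because $\widehat\omega_\varepsilon$ is not a K\"ahler form on $X$ globally but only locally near $D$.
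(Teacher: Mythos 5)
Your Steps 1, 2 and 4 follow the paper's strategy, but Step 3 --- which you rightly single out as the heart of the matter --- contains a genuine gap, and it sits exactly where the paper's argument takes a different turn. Running Chern--Lu on $\tr_{\omega_\varepsilon}\widehat\omega_\varepsilon$ requires a uniform \emph{upper} bound $B_\varepsilon$ on the holomorphic bisectional curvature of the regularized reference metrics, and no such bound exists, for any regularization and regardless of $\tau\le\frac12$. In the one-variable model $\widehat g_{1\bar 1}=(\varepsilon^2+|z|^2)^{\tau-1}$ one computes
\[
R_{1\bar11\bar1}=(1-\tau)\,\varepsilon^2(\varepsilon^2+|z|^2)^{\tau-3},
\qquad
\frac{R_{1\bar11\bar1}}{\widehat g_{1\bar 1}^{\,2}}=(1-\tau)\,\varepsilon^2(\varepsilon^2+|z|^2)^{-1-\tau},
\]
which equals $(1-\tau)\varepsilon^{-2\tau}\to+\infty$ on the divisor. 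This is forced by the geometry: the limit is a cone of angle $2\pi\tau<2\pi$, whose curvature is a positive Dirac mass along $Y_j$, so any smoothing must concentrate arbitrarily large \emph{positive} curvature there. (Your claim that the transverse curvature of the model is ``non-positive when $\tau\le\frac12$'' is therefore not correct; the honest one-dimensional cone metric is flat away from the vertex, and $\tau\le\frac12$ governs the mixed tangential/transverse components and the error terms coming from the Hermitian metrics $h_j$, not the sign of the transverse curvature.) The paper's key point --- its Proposition 2.1 --- is precisely that the upper bound can be dispensed with: using the Aubin--Yau inequality for $\Delta'\log\tr_{\omega_\varepsilon}\omega'$ (trace taken in the order opposite to yours), the curvature enters only through $\sum_{i\le l}\bigl({1+\varphi_{i\bar i}\over 1+\varphi_{l\bar l}}+{1+\varphi_{l\bar l}\over 1+\varphi_{i\bar i}}-2\bigr)R_{i\bar i l\bar l}$, where the bracket is nonnegative, so only a uniform \emph{lower} bound on the bisectional curvature of $\omega_\varepsilon$ is needed; that bound does hold when $\tau_j\le\frac12$ (the blow-up above has the favourable sign), and this is where the hypothesis is actually used.

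One could try to rescue your scheme by taking the reference to be the honest, unregularized cone metric, whose bisectional curvature on $X_0$ can be bounded above for $\tau\le\frac12$ when $D$ is smooth and irreducible (this is essentially Brendle's route). But then the maximum principle must be applied on the noncompact manifold $X_0$, which requires barrier constructions or a delicate approximation argument to rule out that the maximum escapes to $\mathrm{Supp}(D)$ --- exactly the difficulty that the paper's ``regularize and use only the lower curvature bound'' scheme is designed to avoid, and which lets it handle a divisor with several components. As written, your Step 3 would fail at the point where you assert the uniform boundedness of $B_\varepsilon$.
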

\medskip
 
 This problem has already been studied in many important particular cases.
Indeed, in the standard orbifold case, corresponding to coefficients $\displaystyle \tau_j=\frac 1 {m_j}$ for some natural numbers $m_j \ge 2$, G. Tian and S.-T. Yau have established in \cite{Tia} the existence of such a Kähler-Einstein metric compatible with the (standard) orbifold structure, in the case where $K_X+D$ is ample. R. Mazzeo (\cite{Maz}) annonced the existence of Kähler-Einstein metrics with cone singularities when $D$ is smooth and irreducible (assuming $K_X+D$ ample) while T. Jeffres (\cite{Jef}) studied the uniqueness under the same assumptions. Moreover the recent article by S. Donaldson 
(cf. \cite{Don} and the references therein) is very much connected with the result above; even more recently, we refer to the papers by S. Brendle  (cf. \cite{Brendle}) and R. Berman (cf. \cite{rber}) 
for the complete analysis of the Ricci-flat (resp. positive Ricci curvature) case under the assumption that $D$ had only one smooth component. After the first version of this article appeared, T. Jeffres, R. Mazzeo and Y. Rubinstein gave in \cite{JMR} a complete treatment of the Kähler-Einstein problem for metrics with cone singularities along one smooth divisor. \\ 

We note that the assumption $\displaystyle \tau_j\le {1\over 2}$ is automatically satisfied in the orbifold case, and that it is also appears in a crucial way in \cite{Brendle} so as to bound the holomorphic bisectional curvature of the cone metric outside the aforesaid hypersurface.\\

We discuss now briefly our approach to the proof of this result. The strategy is to regularize the equation $(\mathrm{MA})$ and to obtain uniform estimates; then $\omi$ will be obtain as a limit point of solutions of the regularized equations.  In order to achieve this goal, we will proceed as follows.
We first approximate the standard cone metric $\omega_{\rm o}$ 
(or better say, its global version) with a sequence of smooth K\"ahler metrics $\displaystyle(\ome)_{\varepsilon> 0}$
on $X$. \\

The approximations are constructed such that 
$\omega_{\varepsilon}:= \om+ \iddb \pse$ belongs to a fixed cohomology class $[\om]$ for some metric $\om$ on $X$. The explicit expression of $\psi_\ep$ 
is given in section \ref{sec2}. 
Our candidate for the sequence 
converging to the metric $\omega_{\infty}$ we seek 
will be
\[\om_{\vpe}:=\ome+ \iddb \vpe\] where $\vpe$ is the solution 
to the following Monge-Amp\`ere equation, which may be seen as a regularization of equation $(\textrm{MA})$:
\[\om_{\vpe}^n=\frac{e^{f+\lambda(\pse+\vpe)}}{\prod_{j=1}^d(\ep^2+|s_j|^2)^{1-\tau_j}}\om^n.\leqno (\star_{\ep})\]
Here we have $\lambda \in \{0, 1\}$, $f\in \mathscr C^{\infty}(X)$
(this function will be given by the geometric context in the second part 
of this article), and the $s_j$'s are sections of hermitian line bundles $(L_j, h_j)$ such that $Y_j=(s_j=0)$. If $\lambda=0$, we impose moreover the normalization
\[\int_X \vpe \, dV_{\omega}=0.\]

First we remark that if
$\varepsilon> 0$, then we can solve the equation $(\star_{\ep})$ and obtain a solution $\varphi_\varepsilon\in \cC^\infty(X)$ thanks to the fundamental theorem of Yau in \cite{Yau78}. Indeed, this can be done since $\displaystyle \omega_{\varepsilon}$ is a genuine metric, i.e. it is smooth. Of course, the main part of our work is to analyze the uniformity properties of the family of functions
$$(\varphi_\varepsilon)_{\varepsilon> 0}\leqno(39)$$ 
as $\varepsilon\to 0$. To this end,
we mimic the steps of the "closedness" part of the method of continuity in \cite{Yau78}, as follows.\\

\begin{itemize}
\item Using the results of \cite{Kolo}, we already obtain $\cC^0$ estimates; this combined with standard results in the theory of Monge-Amp\`ere equations gives us \textit{interior} $\cC^{2, \alpha}$ estimates \emph{provided that} global  
$\cC^{2}$ estimates have been already established. If we fulfill this program, than we can extract from $(\om_{\vpe})_{\ep>0}$ a subsequence converging to the desired solution $\omi$, which will henceforth be smooth outside the support of $D$. \\

\item As we mentioned earlier, we aim to compare $\omi$ and $\om_o$, and to this end we need \textit{global} $\cC^2$ estimates on $\vp_{\ep}$, i.e. to compare $\displaystyle \omega_{\varphi_\ep}$ and $\ome$ in an uniform manner. The key observation, though rather simple, is that in our situation, we only need to obtain a uniform lower bound on the holomorphic curvature of $\om_{\ep}$ so as to get the estimates. In the next section, we detail the preceding observation, providing a general context under which one may obtain such $\cC^2$ estimates. \\

\end{itemize}

In conclusion, we do not have to deal directly with the 
singular metric $\omega_o$ since we are using its regularization family
$\displaystyle (\omega_{\varepsilon})_{\varepsilon> 0}$: it is thanks to this simple approach 
that we can avoid the "openness'' part of the continuity
method, and it enables us to 
treat the case where $D$ is not necessarily irreducible, compare with 
 \cite{Don}, \cite{Brendle}, \cite{rber} and the references therein
 (especially \cite{KobR}, \cite{BanKob}).\\\\
 
 We study then equation $(\mathrm{MA})$ in the case where $\lambda=-1$. We know that this equation (even when the volume form is smooth) does not necessarily admit a solution, so that we can't use the same techniques as previously. Therefore, we will only consider the cases where we already know that $(\mathrm{MA})$ admits a solution. More generally, we prove the following result:
 
 \begin{thmb}
 Let $X$ be a compact Kähler manifold and $D=\sum (1-\tau_j) Y_j$ a divisor with simple normal crossings such that its coefficients satisfy $0<\tau_j\le 1/2$. Let $\mu_{D}=dV/\prod_j |s_j|^{2(1-\tau_j)}$ be a volume form with cone singularities along $D$, $\psi$ a bounded quasi-psh function, and $\omega$ a Kähler form on $X$. Then any (bounded) solution $\vp$ of
\[(\om+ \iddb \vp)^n = e^{-\psi} \mu_{D}\]
is Hölder-continuous and the metric $\om+ \iddb \vp$ has cone singularities along $D$.
 \end{thmb}

We may notice that we only need to assume the existence of a solution $\vp$ belonging to the space $\mathcal E(X,\om)\supset PSH(X,\om) \cap L^{\infty}(X)$; we refer to section \ref{logfano} for the details. \\
Now, to get back to our initial equation $(\mathrm{MA})$ with $\lambda=-1$, we apply Theorem \textbf{B} to $\psi=\vp$. This shows in particular that any Kähler-Einstein metric (in a sense to be defined) on a log-Fano manifold $(X,D)$ has cone singularities along $D$, as soon as the coefficients of $D$ are greater than $1/2$.\\
In particular, using the recent results of Berman \cite[Theorem 1.5]{rber}, we see that a Fano manifold $X$ carrying a smooth anticanonical divisor $D\in |-K_X|$ admits a conic Kähler-Einstein metric $\om_{\tau}$ for any $\tau$ sufficiently small - in the sense that \[\Ric \om_{\tau}=\tau \om_{\tau}+(1-\tau) [D]\]
This enables to start the continuity method in the program proposed by Donaldson \cite{Don}.  \\\\

In the second part of this article we will establish a few
results concerning the holomorphic tensors on geometric orbifolds; our theorem 
gives a  
positive answer to a question raised by the first named author in \cite{Camp2},
where these objects were introduced. 
The (orbifold) tensor bundles which are 
$r$-contra\-variant and $s$-covariant are denoted by $T^r_s(X|D)$. They are the locally free $\Ox$-modules whose (local) sections are the usual holomorphic tensors (outside $\mathrm{Supp}(D)$) which are bounded with respect to any metric having cone singularities along $D$. We will explain this definition and give some of the basic properties of $T^r_s(X|D)$ in section \ref{sec3}.\\

We have the following result.

\begin{thmc}

Let $(X, D)$ be a log-smooth klt pair; in addition, we assume that for all $j\in J$, we have
$\displaystyle 0< \tau_j\le 1/2$. Then the following assertions hold true.

\begin{enumerate}
\item[$(i)$] If the Chern class $c_1(K_X+ D)$ contains a K\"ahler metric, then we have
$$H^0\big(X, T_s^r(X|D)\big)= 0,$$
for any $r\ge s+ 1$.
\item[$(ii)$] If $-c_1(K_X+ D)$ contains a K\"ahler metric, then we have
$$H^0\big(X, T_s(X|D)\big)= 0$$
for any $s\ge 1$.
\item[$(iii)$] If $c_1(K_X+ D)$ contains a smooth, semi-positive 
(resp. semi-negative) representative, then the holomorphic sections of the bundle 
$T^r(X|D)$ (resp. $T_s(X|D)$) are parallel.\\
\end{enumerate}
\end{thmc}

Thus Theorem \textbf{C} is a generalization of the classical 
results by S.~Kobayashi 
and A. Lichnerowicz (cf. \cite{Kob80}, \cite{Kob81}, \cite{Li67}, \cite{Li71}) in the setting of pairs.
The meaning of the word "parallel" in the part $(iii)$ of the theorem
above
will become clear in the section \ref{sec4}; we just mention here that this implies the fact that 
the sections of the bundle $T^r(X|D)$ vanishing a point of $X_0$
are identically zero, if $c_1(K_X+ D)$ contains a smooth, semi-positive representative.\\

We explain next the outline of our proof.
Basically, we follow the differential-geometric arguments in the articles 
quoted above (in the same spirit 
as in \cite{DPS}, \cite[Theorem 14.1]{Dem95}): the fundamental difference is that the metrics we are dealing with here are singular, and 
as usual this creates important difficulties. \\

We discuss next Theorem \textbf{C} e.g. the case 
$K_X+ D> 0$; as we mentioned before, under this hypothesis we are able to construct a metric
 $\omega_{\infty}$ which is smooth on $X_0:= X\setminus \cup_{j\in J}Y_j$
 such that
 \begin{equation}
 \label{1}
\Ric(\omega_{\infty})= -\omega_{\infty}
\end{equation}
on $X_0$ and whose eigenvalues are comparable with the ones of the "standard" cone metric associated to $(X, D)$ (see section \ref{sec2}). \\

An $r$-contravariant and 
$s$-covariant tensor $u$ which is only defined on 
$X_0$ will be called \emph{bounded} if the supremum of its norm with respect to
$\omega_\infty$ on $X_0$ is bounded.
We consider the (musical) contraction operator
$$\#: \cC^\infty_{\cB}\big(X_0, T^r_{s}(X_0)\big)\to \cC^\infty_{\cB}\big(X_0, T^s_{r}(X_0)\big)$$
between the spaces of bounded tensors; a simple verification consists in showing that $\#$ is well-defined. This class of tensors is important for us because of the following reason: a holomorphic tensor $u$ is bounded, and so is $\#u$ (cf. \ref{lem32}). We use this remark in 
the following way.
Let
$$u\in H^0\big(X, T_s(X|D)\big)$$
be a holomorphic tensor field; by the Bochner formula, the difference
\begin{equation}
\label{2}
\Vert \dbar (\theta_\varepsilon u)\Vert^2- \Vert \dbar 
\big(\theta_\varepsilon \#(u)\big)\Vert^2
\end{equation}
is given by an operator of order zero, involving the eigenvalues of $\displaystyle \Ric({\omega_{\infty}})$ with respect to $\omega_{\infty}$. Here we denote by 
$\theta_\varepsilon$ a truncation function (which is needed because 
$\displaystyle \omega_{\infty}$ is a genuine metric only on $X_0$);
by equation \eqref{1}, the eigenvalues of $\displaystyle \Ric({\omega_{\infty}})$
are strictly negative, and if we choose the truncation function carefully enough,
we can show that the $L^2$ norm of the \emph{error term}
$$u\otimes \dbar \theta_\varepsilon $$
converges to zero, as $\varepsilon\to 0$. Along the whole procedure
the fact that $\omega_{\infty}$ is comparable with the cone metric is absolutely crucial,
since it enables us to apply the Bochner method with a good control with respect to
$\varepsilon$.


\section{Laplacian estimates revisited}
\label{sec:obs}
\noindent Let $(X, \omega_\varepsilon)$ be a $n$-dimensional compact 
complex manifold, endowed with a sequence of 
K\"ahler metrics. For each $\varepsilon> 0$, we assume that we have
$$\ome= \omega+ dd^c \psi_\varepsilon$$
for some smooth function $\psi_\varepsilon$.
We consider the following equation:
\begin{equation}
\label{eq:ma}
(\omega_\varepsilon+ dd^c \varphi_\varepsilon)^n= e^{F_\varepsilon+\lambda (\vp_\varepsilon+ \psi_\varepsilon)}\omega^n_\varepsilon
\end{equation}
where $\lambda \in \{-1,0,1\}$. If $\lambda= 0$, then we normalize the solution
$\varphi_\varepsilon$ by $\displaystyle \int_X\varphi_\varepsilon \, dV_\omega= 0$. We remark that the equation \eqref{eq:ma} is modeled after 
$(\star_\varepsilon)$, because the factor $$\displaystyle {1\over 
\prod_{j=1}^d(\ep^2+|s_j|^2)^{1-\tau_j}
}$$ will be absorbed by $\ome^n$, by an appropriate choice of 
$\psi_\ep$.\\

Our goal in this section is to derive a geometric context under which
we can obtain a priori \emph{uniform} ${\mathscr C}^2$ estimates for $\varphi_\varepsilon$ in equation \eqref{eq:ma}. This is the content of the following result, crucial in our approach, which is based on two observations: at first, we notice that once we have $\cC^0$ estimates, then one only needs to bound from \textit{below} the holomorphic bisectional curvature of $\ome$ so as to obtain \textit{a priori} $\cC ^2$ estimates (cf. lemma \ref{klem}). The second observation is that one may allow a rhs in the form $e^{\psi_1-\psi_2}$ with $\psi_1,\psi_2$ quasi psh. This borrowed from \cite{BBEGZ}, which itself could be seen as an effective version of results appearing during the proof of the main theorem of \cite{Paun}.

\begin{prop}
\phantomsection
\label{prop}
Let $\omega$ be a Kähler form on $X$, $\psi_1, \psi_2$ smooth functions and $\vp$ a smooth $\om$-psh function such that
\[ (\om + \iddb \vp)^n = e^{\psi_1-\psi_2} \om^n \]
Assume given a constant $C>0$ such that 	
\begin{enumerate}
\item[$(i)$] $\sup_X |\vp| \le C$, 
\item[$(ii)$] $\Delta_{\om} \psi_1 \ge -C $ and $\,  \sup_X |\psi_1| \le C$,
\item[$(iii)$] $dd^c \psi_2 \ge -C \om $ and $\,  \sup_X |\psi_2| \le C$,
\item[$(iv)$] $\Theta_{\om}(T_X) \ge -C \om \otimes \mathrm{Id}_{T_X}$.
\end{enumerate}
Then there exists a constant $A$ depending only on $n$ and $C$ such that 
\[A^{-1} \om \le \om + \iddb \vp  \le A \,\om.\]
\end{prop}

For the computations to follow, it may be useful to translate the (intrinsic) conditions of the previous proposition, especially $(iv)$, in local coordinates. 
Namely, the inequality in $(iv)$ amounts to saying that the following inequality holds:

\begin{equation}
\label{in2}
\sum_{p, q, r, s}R_{p\ol q r \ol s}(z)v^p\ol{v^q}w^r\ol{w^s}\ge - C|v|_{\omega}^2
|w|_{\omega}^2
\end{equation}
for any 
vector fields $\displaystyle v= \sum_p v^p{\partial\over \partial z^p}$ and 
$\displaystyle w= \sum_i w^i{\partial\over \partial z^i}$. 

\noindent The notation in the above relations is as follows:
in local coordinates, we write
$$\omega= \sqrt{-1}\sum_{i, j}g_{ j\ol i} \, dz^j \wedge dz^{\ol i};$$
and the corresponding components of the curvature tensor 
are
$$R_{ j\ol i k \ol l }:= - {\partial ^2{g_{j\ol i}}\over \partial z^k\partial z^{\ol l}}+
\sum_{s, r}g^{s\ol r}{\partial g_{ s\ol i}\over \partial z^{\ol l}}
{\partial g_{ j\ol r}\over \partial z^{k}}.$$\\

The proof of Proposition \ref{prop} is based on the following key lemma: 

\begin{lemm}
\phantomsection
\label{klem}
Let  $\om'=\om+ \iddb  \varphi$ two cohomologous Kähler forms on $X$, and let $f$ be defined by $\om'^{\,n}=e^f \om^n$. Then there exists a constant $B>0$ depending only on a lower bound for the holomorphic bisectional curvature of $\omega$ such that
\[\Delta' \log \tr_{\om}(\om')\ge \, \frac{\Delta f}{\tr_{\om}(\om')}-B \,  \tr_{\om'}(\om)\]
where $\Delta$ (resp. $\Delta'$) is the Laplace operator attached to $\om$ (resp. $\om'$).
\end{lemm}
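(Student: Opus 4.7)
The plan is to follow the classical Aubin--Yau-type computation for the trace of one Kähler metric with respect to another, establishing the desired inequality pointwise. I will fix a point $p\in X$ and choose holomorphic coordinates that are normal for $\omega$ at $p$, so that $g_{i\bar j}(p) = \delta_{ij}$ and $\partial_k g_{i\bar j}(p) = 0$, and in which $\omega'$ is simultaneously diagonal: $g'_{i\bar j}(p) = \lambda_i \delta_{ij}$. Setting $u := \tr_\omega \omega'$, I will split
\[\Delta' \log u = \frac{\Delta' u}{u} - \frac{|\partial u|^2_{\omega'}}{u^2},\]
and bound $\Delta' u$ from below by combining a direct expansion, which produces the holomorphic bisectional curvature of $\omega$ together with fourth-order derivatives $\varphi_{i\bar i k\bar k}$, with the second differentiation of the Monge--Ampère relation $\log\det g' = f + \log\det g$, which replaces those fourth derivatives by $\Delta f$, by further curvature terms, and by a nonnegative cubic term in $|\varphi_{\alpha\bar\beta k}|^2$; a Cauchy--Schwarz bound then handles the gradient term.

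For the expansion of $\Delta' u$ at $p$, all first derivatives of $g$ and $g^{-1}$ vanish, so only second derivatives contribute. In normal coordinates one has $\partial_k\partial_{\bar k} g^{i\bar i}(p) = R_{i\bar i k\bar k}$ (the holomorphic bisectional curvature of $\omega$) and $\partial_k\partial_{\bar k} g'_{i\bar i}(p) = -R_{i\bar i k\bar k} + \varphi_{i\bar i k\bar k}$, so
\[\Delta' u(p) = \sum_{i,k} R_{i\bar i k\bar k}\,\frac{\lambda_i - 1}{\lambda_k} + \sum_{i,k}\frac{\varphi_{i\bar i k\bar k}}{\lambda_k}.\]
Differentiating $\log\det g' = f + \log\det g$ twice and using normal coordinates then yields the identity
\[\sum_{i,k}\frac{\varphi_{i\bar i k\bar k}}{\lambda_i} = \Delta f + \sum_{i,k}\frac{R_{i\bar i k\bar k}}{\lambda_i} - \sum_{i,k}R_{i\bar i k\bar k} + \sum_{\alpha,\beta,k}\frac{|\varphi_{\alpha\bar\beta k}|^2}{\lambda_\alpha\lambda_\beta}.\]
Using the symmetry $\varphi_{i\bar i k\bar k} = \varphi_{k\bar k i\bar i}$, the two sums $\sum_{i,k}\varphi_{i\bar i k\bar k}/\lambda_i$ and $\sum_{i,k}\varphi_{i\bar i k\bar k}/\lambda_k$ coincide after relabeling. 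Substituting and then symmetrizing the curvature contribution via $R_{i\bar i k\bar k} = R_{k\bar k i\bar i}$ reorganizes the curvature part as
\[\frac{1}{2}\sum_{i,k} R_{i\bar i k\bar k}\,\frac{(\lambda_i - \lambda_k)^2}{\lambda_i \lambda_k},\]
which under $R_{i\bar i k\bar k}\ge -B$ is bounded below by $-B\,(u\,\tr_{\omega'}\omega - n^2)$.

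Finally, at $p$ one computes $\partial_k u = \sum_i \varphi_{i\bar i k}$, so Cauchy--Schwarz gives
\[\frac{|\partial u|^2_{\omega'}}{u^2}(p) \le \frac{1}{u}\sum_{i,k}\frac{|\varphi_{i\bar i k}|^2}{\lambda_i\lambda_k} \le \frac{1}{u}\sum_{\alpha,\beta,k}\frac{|\varphi_{\alpha\bar\beta k}|^2}{\lambda_\alpha\lambda_\beta},\]
the last step merely dropping nonnegative off-diagonal terms. Dividing the lower bound on $\Delta' u$ by $u$ and subtracting $|\partial u|^2_{\omega'}/u^2$ cancels the cubic $|\varphi_{\alpha\bar\beta k}|^2$ contribution and produces
\[\Delta'\log u(p) \ge \frac{\Delta f}{u} - B\,\tr_{\omega'}\omega + \frac{Bn^2}{u} \ge \frac{\Delta f}{\tr_\omega\omega'} - B\,\tr_{\omega'}\omega,\]
which is the claimed bound. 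The main obstacle is the middle step: the combinatorial bookkeeping needed to derive the Monge--Ampère identity in a usable form, to recognize that symmetrization of the curvature yields the manifestly nonnegative square $(\lambda_i - \lambda_k)^2/(\lambda_i\lambda_k)$, and to verify that the cubic $|\varphi_{\alpha\bar\beta k}|^2$ remainder produced by the Monge--Ampère equation is \emph{exactly} what is needed to absorb the Cauchy--Schwarz loss from the gradient term; everything else is a routine calculation in normal coordinates.
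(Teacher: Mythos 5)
Your proof is correct and follows essentially the same route as the paper: the paper quotes the Aubin--Yau differential inequality from Siu's lecture notes as a black box and then diagonalizes, symmetrizes the curvature contribution into $\sum_{i\le l}\bigl(\tfrac{\lambda_i}{\lambda_l}+\tfrac{\lambda_l}{\lambda_i}-2\bigr)R_{i\bar i l\bar l}$, and bounds $R_{i\bar i l\bar l}\ge -B$, exactly as you do. The only difference is that you rederive that quoted inequality from scratch (the normal-coordinate expansion, the differentiated Monge--Ampère identity, and the Cauchy--Schwarz cancellation of the gradient term against the cubic $|\varphi_{\alpha\bar\beta k}|^2$ remainder), which is the standard proof of the cited estimate.
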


\begin{proof}
Our starting point is the following result, extracted from \cite[(3.2) p. 99]{Siu}. Gathering terms coming (with different signs) from the scalar and the Ricci curvature together, we will obtain a similar inequality involving only a lower bound for the holomorphic bisectional curvature.


\begin{prop} 
We have the following inequality
\begin{equation}
\label{ineq}
\Delta' (\log\tr_{\om} \om' ) \ge
\frac{1}{\tr_{\om}\om'}\big(-g^{j\ol i}
R_{j\ol i}+ 
 \Delta f+ g^{\prime k\ol l} R^{j\ol i}_{ k\ol l}g^\prime_{j\ol i}\big)
\end{equation}
where we use the Einstein convention.
\end{prop}

\noindent The notations above are as follows:\\
\begin{itemize}
\item In local coordinates, we have
$$\omega= \sqrt{-1}\sum_{i, j}g_{j\ol i} \, dz^j \wedge dz^{\ol i}$$
as well as
$$\om^\prime= 
\sqrt{-1}\sum_{i, j}g^\prime_{ j\ol i} \, dz^j \wedge dz^{\ol i};$$

\item The components of the tensor $\big(R^{j\ol i}_{ k\ol l}\big)$ are obtained from 
$\big(R_{ j\ol i k \ol l}\big)$ by contraction with the metric $\omega$, and 
$$R_{ j\ol i}:= \sum_{p, q}g^{p\ol q} R_{ j\ol i p\ol q}$$
are the coefficients of the Ricci curvature of $\om$ in the $(z)$--coordinates.\\
\end{itemize}

\noindent The right hand side of the inequality \eqref{ineq} is independent of the 
coordinates in which we choose to express the metrics and the corresponding curvature tensors. \\

\noindent Let $p\in X$ be an arbitrary point; we consider a coordinate
system 
$$w= (w^1,\ldots, w^n)$$ on a small open set containing $p$, such that 
$\omega$ is orthonormal
and such that $\om'$ is diagonal at $p$ when expressed in
the $w$-coordinates. 



\noindent We rewrite the rhs term of the inequality \eqref{ineq} in the $w$-coordinates; we have
\begin{equation}
\label{eq:ob1}
\sum_{i, j}g^{j\ol i} R_{ j\ol i}= \sum_{i, l}
R_{ i\ol i l \ol l}(w)
\end{equation}
as well as 
\begin{equation}
\label{eq:ob2}
\sum_{i, j, k, l}g^{\prime k\ol l} R^{ j\ol i}_{k\ol l}
g^\prime_{ j\ol i}= \sum_{i, l}{1+ \varphi_{i \ol i}\over 1+ \varphi_{ l\ol l}}R_{ i\ol i l \ol l}(w)
\end{equation}
The curvature tensor has the following symmetry property
\begin{equation}
\label{eq:ob3}
R_{i\ol i l \ol l}(w)= R_{ l\ol l i \ol i}(w)
\end{equation}
hence by \eqref{eq:ob1}, \eqref{eq:ob2} and \eqref{eq:ob3} combined with the inequality \eqref{ineq} we obtain
\begin{equation}
\label{eq:ob4}
\Delta' (\log\tr_{\om} \om' ) \ge
\frac{1}{\tr_{\om} \om'} \left(\sum_{i\le l} \left({1+ \varphi_{i \ol i}\over 1+ \varphi_{ l\ol l}}+ {1+ \varphi_{l \ol l}\over 1+ \varphi_{ i\ol i}}- 2\right) R_{ i\ol i l \ol l}(w)+ \Delta f \right)
\end{equation}
We choose a lower bound $B>0$ for the holomorphic bisectional curvature:
\begin{equation}
\label{in:minor}
R_{ i\ol i l \ol l}(w)\ge -B,
\end{equation}
so that we obtain successively:
\begin{eqnarray*}
\Delta' (\log\tr_{\om} \om' )& \ge &\frac{1}{\tr_{\om} \om'} \left(\sum_{i\le l} \left({1+ \varphi_{i \ol i}\over 1+ \varphi_{ l\ol l}}+ {1+ \varphi_{l \ol l}\over 1+ \varphi_{ i\ol i}}- 2\right) R_{ i\ol i l \ol l}(w)+ \Delta f \right) \\
&\ge& \frac{-B}{\tr_{\om} \om'} \sum_{i\le l} \left({1+ \varphi_{i \ol i}\over 1+ \varphi_{ l\ol l}}+ {1+ \varphi_{l \ol l}\over 1+ \varphi_{ i\ol i}}- 2\right) + \frac{\Delta f}{\tr_{\om} \om'}  \\
&\ge & \frac{\Delta f +Bn(n+1)}{\tr_{\om} \om'}- B \sum_{i} \frac{1}{1+\vp_{ i\b i}}\\
&\ge & \frac{\Delta f }{\tr_{\om} \om'}- B \, \tr_{\om'} \om\\
\end{eqnarray*}
which is exactly the content of the lemma.
\end{proof}

Now we may end the proof of Proposition \ref{prop}.

\begin{proof}[Proof of Proposition \ref{prop}]
We use lemma \ref{klem}, which gives:
\begin{equation}
\label{eq:lem2}
\Delta' \log \tr_{\om}\om' \ge \, \frac{\Delta (\psi_1-\psi_2)}{\tr_{\om}\om' }-C \,  \tr_{\om'}\om 
\end{equation}
if $\om'= \om + \iddb \vp$, and $\Delta'$ is the laplacian associated to $\om'$.
Let us observe the both following inequalities: $\om' \le \tr_{\om}\om'  \, \om$ and $C\om + \iddb \psi_2 \le \tr_{\om'}(C\om + \iddb \psi_2) \,  \om'$. They lead to 
\[n \le \tr_{\om}\om'  \,  \tr_{\om'}\om  \]
and
\[\frac{n+\Delta \psi_2}{\tr_{\om} \om'} \le C \tr_{\om'} (\om) + \Delta'  \psi_2\]
Therefore, \eqref{eq:lem2} gives
\[\Delta' (\log \tr_{\om}\om' +\psi_2) \ge \, -C_1 \,  \tr_{\om'}\om \]
with $C_1= (2+\frac 1 n )C$.\\
As $\om' = \om + \iddb \vp$, we have $n= \tr_{\om'} (\om) + \Delta' \vp$, so that
\[\Delta' (\log \tr_{\om}\om' +\psi_2-(C_1+1)\vp) \ge   \tr_{\om'}\om -C_2\]
with $C_2=n(C_1+1)$.\\
Now, at the point $p\in X$ where $\log \tr_{\om}\om' +\psi_2-(C_1+1)\vp$ attains its maximum, we have
\[ (\tr_{\om'} \om)(p) \le C_2\]
Moreover, from the inequality
\[ \tr_{\om} \om' \le e^{\psi_1-\psi_2} (\tr_{\om'} \om)^{n-1}\]
we deduce
\begin{eqnarray*}
\log(\tr_{\om} \om') &=& (\log \tr_{\om}\om' +\psi_2-(C_1+1)\vp)+(C_1+1)\vp - \psi_2\\
& \le  & (\log \tr_{\om}\om'+\psi_2-(C_1+1)\vp)(p)+(C_1+1)\vp - \psi_2\\
&\le & \psi_1(p)+(n-1)\log C_2 -(C_1+1)\vp(p) +(C_1+1)\vp - \psi_2 \\
& \le & C_3
\end{eqnarray*}
where $C_3=2C(C_1+2)+(n-1)\log C_2$. As $\om = e^{\psi_2-\psi_1} \om'$, one has 
\begin{eqnarray*}
\tr_{\om'} \om &\le& e^{\psi_2-\psi_1} (\tr_{\om} \om' )^{n-1}\\
&\le & e^{2C+(n-1)C_3}
\end{eqnarray*}
and therefore we may find a constant $A>0$ depending only on $n$ and $C$ such that
\[A^{-1} \om \le \om + \iddb \vp  \le A \,\om\]
which concludes the proof.
\end{proof}

\medskip
During the following paragraphs, we will construct the sequence of metrics
$(\ome)_{\varepsilon> 0}$ satisfying equation \eqref{eq:ma} with $\lambda\in \{0, 1\}$ and we will verify that it has the properties required in Proposition \ref{prop} (applied to $\psi_1=F_\varepsilon+\lambda (\vp_\varepsilon+ \psi_\varepsilon)$ and $\psi_2=0$); this will conclude the proof of the Main Theorem, by passing to the limit when $\ep \to 0$. More precisely, we need to show the following uniform estimates:
\begin{enumerate}
\item [$(i)$] $\sup_X  |\vpe| \le C$,
\item [$(ii)$] $\ome\ge C^{-1}\omega, \quad \sup_X|\psi_\varepsilon|\le C, \quad \sup_X |F_\varepsilon|\le C$,
\item [$(iii)$] $\Delta_{\ome} F_\varepsilon \ge -C$,
\item [$(iv)$] $\Theta_{\ome}(T_X) \ge -C \,  \ome \otimes \mathrm{Id}_{T_X}$ .
\end{enumerate}
Indeed, if these estimates are established, then the only missing point in order to apply Proposition \ref{prop} is the lower bound for $\Delta_{\ome}(\vpe+ \psi_{\ep})$. But as $\ome+ dd^c \vpe$ is a Kähler form and $\ome=\om + \iddb \psi_{\ep}$ satisfies $\ome \ge C^{-1} \om$, then 
$\Delta_{\ome}(\vpe+ \psi_{\ep}) \ge -n(1+C^{-1})$, as desired.\\

Finally, the case where $\lambda = -1$ will be studied in section \ref{logfano}, where Proposition \ref{prop} will be used in its general form, in contrary to the previous cases $\lambda\in \{0,1\}$ where we took $\psi_2=0$. 
\section{The sequence of metrics}
\label{sec2}
We consider the function $\chi$ such that for any $t\ge 0$, 
\begin{equation}
\label{eq:chi}
\chi(\varepsilon^2+ t)= {1\over \tau}\int_0^t{(\varepsilon^2+ r)^{\tau}- \varepsilon^{2\tau}\over r}dr
\end{equation}
and let $s$ be a holomorphic section of some hermitian line bundle $(L, h)$; 
we have
\[\dbar \big( \chi(\varepsilon^2+ |s|^2)\big)= 
\chi^\prime(\varepsilon^2+ |s|^2)\langle s, D^\prime s\rangle .\]
We apply the $\partial$-operator to the preceding inequality and we obtain
\[\ddbar \big( \chi(\varepsilon^2+ |s|^2)\big) = 
\chi^{\prime\prime}(\varepsilon^2+ |s|^2)|s|^2\langle D^\prime s, D^\prime s\rangle+ \chi^\prime(\varepsilon^2+ |s|^2)\langle D^\prime s, D^\prime s\rangle
- \chi^\prime(\varepsilon^2+ |s|^2)|s|^2\Theta \]
where $\Theta$ is the curvature form of $(L,h)$. By formula \eqref{eq:chi}, we obtain
$$\tau\chi^\prime(\varepsilon^2+ t)= {(\varepsilon^2+ t)^{\tau}- \varepsilon^{2\tau}\over t}$$
as well as
$$\tau \chi^{\prime\prime}(\varepsilon^2+ t)= 
{\tau(\varepsilon^2+ t)^{\tau-1}\over t}-
{(\varepsilon^2+ t)^{\tau}- \varepsilon^{2\tau}\over t^2}.$$
Hence we get
\begin{equation}
\label{metric}
\ddbar \big( \chi(\varepsilon^2+ |s|^2)\big)= (\varepsilon^2+ |s|^2)^{\tau-1}\langle D^\prime s, D^\prime s\rangle- {1\over \tau}
(\varepsilon^2+ |s|^2)^{\tau}- \varepsilon^{2\tau})\Theta.\\
\end{equation}

We indicate next the construction of a sequence of smooth K\"ahler 
metrics $\displaystyle(\ome)_{\varepsilon\ge 0}$, which 
is a regularization of the cone metric (cf. e.g. \cite{Clodo}).\\

For each $j\in J$, let $\chi_j= \chi_{j, \varepsilon}: [\varepsilon^2, \infty[\to \bR$ be the function defined as follows:  
\begin{equation}
\label{3}
\chi_j(\varepsilon^2+ t)= {1\over \tau_j}\int_0^t{(\varepsilon^2+ r)^{\tau_j}- \varepsilon^{2\tau_j}\over r}dr
\end{equation}
for any $t\ge 0$; we denote by $\varepsilon\ge 0$ a real number. There exists a constant $C> 0$ independent of $j, \varepsilon$ such that
\begin{equation}
\label{4}
0\le \chi_j(t) \le C
\end{equation}
provided that $t$ belongs to a bounded interval. Also, for each $\varepsilon> 0$
the function defined in \eqref{3} is smooth.\\

Let $(L_j, h_j)_{j\in J}$ be a set of hermitian line bundles, such that for each 
$j$ there exists a holomorphic section $s_j\in H^0(X, L_j)$ whose zero set is 
precisely $Y_j$ i.e. 
$$Y_j= (s_j= 0).$$ For each $j$ we denote by $h_j$ a smooth hermitian metric on the bundle $L_j$. 
The choice of the function $\chi_j$ as above is motivated by the following equality, derived from \eqref{metric}:
\begin{equation}
\label{5}
\sqrt{-1}\ddbar \chi_j\big(\varepsilon^2+ |s_j|^2\big)=
\sqrt{-1}{\langle D^\prime s_j, D^\prime s_j\rangle\over 
(\varepsilon^2+ |s_j|^2)^{1- \tau_j}}
- \frac{1}{\tau_j} \big((\varepsilon^2+ |s_j|^2)^{\tau_j}-\varepsilon^{2\tau_j}\big)\Theta_{h_j}(L_j)
\end{equation}
where we denote by $|s_j|$ the norm of the section $s_j$ measured with respect to the metric $h_j$, by $D^\prime$ the $(1, 0)$ part of the Chern connection associated to $(L_j, h_j)$, and by $\Theta_{h_j}(L_j)$ the curvature form of $(L_j, h_j)$.\\

Let $\omega$ be any K\"ahler metric on $X$; we consider the $(1,1)$-form 
\begin{equation}
\label{6}
\omega_{ \varepsilon}:= \omega+ {1\over N}\sum_{j\in J} dd^c \chi_j\big(\varepsilon^2+ |s_j|^2\big)
\end{equation}
on $X$, so that we have 
$$\displaystyle \psi_\varepsilon:= {1\over N}\sum_{j\in J}\chi_j\big(\varepsilon^2+ |s_j|^2\big).$$ 
It is a K\"ahler metric for each $\varepsilon> 0$ 
provided that $"N"$ above is positive enough constant,  
given equality \eqref{5}. If $\varepsilon= 0$, then we will denote the metric above by $\omega_o$. Setting $\Theta_j:=\Theta_{h_j}(L_j)$, we obtain

\begin{equation}
\label{met}
\ome= \omega+ \frac 1 N\sum_j\Big(
{\sqrt{-1}\langle D^\prime s_j, D^\prime s_j\rangle\over (\varepsilon^2+ |s_j|^2)^{1-\tau_j}}- {1\over \tau_j}
(\varepsilon^2+ |s_j|^2)^{\tau_j}- \varepsilon^{2\tau})\Theta_{j}\Big).
\end{equation}
and we remark that for each $\ep> 0$, the metric $\ome$ is smooth.
For $\ep= 0$, the metric $\omega_o$ has precisely the cone singularities 
induced by the pair $(X, D)$. In conclusion, the family 
$\displaystyle (\ome)_{\varepsilon> 0}$ can be seen as regularization of the 
metric $\omega_o$.
\section{Bounding the curvature from below}
\label{sec:curv}
To understand better the curvature estimate we have to prove, we consider a small open set $\Omega$
centered at $p\in X$, and $p\in \cap (s_j=0)$ for $j= 1,\ldots, d$ 
(these sets are assumed to have strictly normal intersections). We define the $(z)$-coordinates by the local expression of $s_j$,
completed in an arbitrary manner. Then we notice that we have
\[\omega_\varepsilon|_\Omega\ge C\sum_i\sqrt{-1}{dz^i \wedge dz^{\bar i}\over (\varepsilon^2+ |z^i|^2)^{1-\tau_i}}\]
hence for any vector 
\[v= \sum {v^i}{\partial\over \partial z^i}\]
such that $\displaystyle |v|_{\ome}= 1$ we have the estimate
\begin{equation}
\label{in7}
\big |{v^i}\big |^2
\le C(\varepsilon^2+ |z^i|^2)^{1-\tau_i}
\end{equation}
where the constant $C$ above is independent of $\varepsilon$.\\

Next, we perform the computation of the components of the curvature tensor;
the point is that only the ``diagonal" components 
$\displaystyle R_{\varepsilon p\ol p p\ol p}(z)$ blow up
as $\varepsilon \ll 1$, and they do it in the
right direction, so that \eqref{in2} will be satisfed. In order to reduce the
complexity of the evaluation of $R_{\varepsilon p\ol q r \ol s}(z)$, we will use
the coordinate systems which are constructed in the following paragraph.

\subsection{An appropriate coordinate system}
\noindent We have the following elementary result.

\medskip 

\begin{lemm}
\phantomsection
\label{lem:coor}
Let $(L_1, h_1),\ldots, (L_d, h_d)$ be a set of hermitian
line bundles, and for each index $j= 1,\ldots,d$, let $s_j$ be a section of $L_j$; we assume that the hypersurfaces 
$$Y_j:= (s_j= 0)$$
are smooth, and that they have strictly normal intersections. 
Let $\displaystyle p_0\in \cap Y_j$; then there exist a constant $C>0 $ and an open set $\Omega\subset X$ centered at $p_0$, such that for any point $p\in \Omega$ there exists a coordinate system $z= (z^1,\ldots, z^n)$ and a trivialization $\theta_j$ for $L_j$ such that:

\begin{enumerate}
\item [$(i)$] For $j= 1,\ldots, d$, we have $Y_j\cap \Omega= (z^j= 0)$;

\item [$(ii)$] With respect to the trivialization $\theta_j$, the metric
$h_j$ has the weight $\varphi_j$, such that 
\begin{equation}
\label{eq:coor}
\varphi_j(p)= 0, \quad d\varphi_j(p)= 0, \quad 
\Big|{\partial^{|\alpha|+|\beta|}\varphi_j\over \partial z^{\alpha}\partial z^{\bar \beta}}(p)\Big|\le C_{\alpha, \beta}
\end{equation}
for some constant $C_{\alpha, \beta}$ depending only on the multi indexes $\alpha, \beta$.
\end{enumerate}
\end{lemm}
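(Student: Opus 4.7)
The plan is to fix the coordinates $z$ once and for all from the normal-crossing structure at $p_0$, and to absorb all dependence on the point $p$ into the trivializations $\theta_j$. The key device is that multiplying a trivialization of $L_j$ by $e^{u_j}$ with $u_j$ holomorphic modifies the weight $\varphi_j$ of $h_j$ by the pluriharmonic function $-2\mathrm{Re}(u_j)$, leaving every mixed $\partial_z^\alpha\partial_{\bar z}^\beta$-derivative with $|\alpha|,|\beta|\ge 1$ invariant. Therefore a single holomorphic polynomial $u_{j,p}$ of degree one in $z$, chosen to kill the value $\hat\varphi_j(p)$ and the $(1,0)$-jet of a background weight at $p$, should suffice.

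First, I would use the normal-crossing hypothesis to pick coordinates $(z^1,\dots,z^n)$ on some neighborhood $\Omega_0$ of $p_0$ with $Y_j\cap\Omega_0=(z^j=0)$ for $j=1,\dots,d$, and choose any smooth background trivializations $\hat\theta_j$ of $L_j$ on $\Omega_0$, with smooth weights $\hat\varphi_j$. Shrinking to a relatively compact polydisc $\Omega\subset\subset\Omega_0$, all partial derivatives of $\hat\varphi_j$ up to bidegree $(2,2)$ are automatically uniformly bounded on $\overline\Omega$; this set $\Omega$ and these coordinates $z$ will be reused for every $p$.

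Next, for each $p\in\Omega$ and each $j$ I set
\[ u_{j,p}(z):=\tfrac12\,\hat\varphi_j(p)+\sum_{k=1}^n \frac{\partial\hat\varphi_j}{\partial z^k}(p)\,(z^k-p^k),\]
which is holomorphic in $z$, and define $\theta_j:=e^{u_{j,p}}\hat\theta_j$. The new weight is $\varphi_j=\hat\varphi_j-2\mathrm{Re}(u_{j,p})$, and direct substitution at $z=p$ gives $\varphi_j(p)=0$ and $\partial\varphi_j(p)=\partial\hat\varphi_j(p)-\partial u_{j,p}(p)=0$, whence $d\varphi_j(p)=0$ since $\varphi_j$ is real. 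Moreover $u_{j,p}+\overline{u_{j,p}}$ is linear-plus-antilinear in $z$, so any derivative of it involving at least one $\partial_z$ and one $\partial_{\bar z}$ vanishes identically.

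Consequently the mixed derivatives $\partial_z^\alpha\partial_{\bar z}^\beta\varphi_j(p)$ with $|\alpha|,|\beta|\in\{1,2\}$ coincide with those of $\hat\varphi_j$ at $p$, which are uniformly bounded on $\overline\Omega$ by the first step; taking $C$ to be the maximum over $j$ and the finitely many admissible multi-indices of $\sup_{\overline\Omega}|\partial_z^\alpha\partial_{\bar z}^\beta\hat\varphi_j|$ yields the required uniform bound, and condition (i) holds by construction. There is no serious obstacle; the only point worth emphasizing is that the statement asks only for control of \emph{mixed} derivatives, which is precisely the information that a holomorphic gauge change preserves, so a degree-one $u_{j,p}$ provides exactly the right number of free parameters to zero out the value and first-order jet at $p$ without disturbing any higher-order mixed data.
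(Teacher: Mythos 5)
Your proof is correct and proves the lemma exactly as stated, but by a genuinely different (and somewhat cleaner) route than the paper's. The paper does not keep the coordinates fixed: it takes the local expressions $w^j$ of the sections $s_j$ in a background trivialization, completes them to coordinates at $p_0$, and then defines the $p$-dependent coordinates $z^j:=w^je^{-l_j(w)}$, where $l_j$ is the degree-one holomorphic jet of the background weight at $p$; this forces the extra verification that $\Vert l_j-l_{j,0}\Vert\ll 1$ for $p$ near $p_0$, so that the modified functions still form a coordinate system uniformly in $p\in\Omega$. Your version, which freezes the coordinates and pushes all the $p$-dependence into the gauge factor, avoids that step and isolates the one fact that makes $(ii)$ work, namely that a holomorphic change of trivialization perturbs the weight by a pluriharmonic function and therefore leaves every mixed derivative $\partial_z^{\alpha}\partial_{\bar z}^{\beta}$ with $|\alpha|,|\beta|\ge 1$ untouched; the uniform constant then comes from compactness of $\overline\Omega$ exactly as you say. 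What the paper's heavier construction buys is a normalization that the lemma does not state but that the rest of section \ref{sec:curv} uses constantly: after the paper's change of coordinates \emph{and} trivialization, the section $s_j$ is represented by precisely the coordinate function $z^j$, so that $|s_j|^2_{h_j}=|z^j|^2e^{-\varphi_j}$, which is the form in which the metric coefficients \eqref{eq:metr} and the curvature are expanded. In your construction $s_j$ is only represented by $z^j$ times a nonvanishing holomorphic unit; to recover the normalization needed downstream you would have to absorb that unit into the trivialization as well, and then the requirement that the representing function stay equal to a coordinate forces you to modify $z^j$ by the same unit --- at which point you have reconstructed the paper's argument. So as a proof of Lemma \ref{lem:coor} your argument stands on its own; as a replacement inside the paper it would need that one additional normalization.
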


\begin{proof}
It is completely standard, but we will nevertheless 
provide a detailed argument, for the sake of completeness. \\
Let $U\subset X$ be a coordinate open set, such that $p_0\in U$. We consider
the trivialization
$$\rho_j:L_j|_U\to U\times \bC$$
and let $w^1,\ldots, w^d$ be the expression of $s_1,\ldots, s_d$ 
with respect to the trivializations above. By the "simple normal crossing" hypothesis,
we can complete the $w's$ to a coordinate system at $p_0$. For each $j$, let
$\mu_j$
be the weight of the smooth metric $h_j$ when restricted to $U$. \\

If $p= p_0$, then we consider the Taylor expansion of $\mu_j$ at $p_0$, as follows
$$\mu_j(w)= \mu_j(0)+ 2\, \mathrm{Re}\Big(\sum_{i= 1}^nw^ia_{ij}\Big)+ {\rm higher\  order\ terms}$$
and we define $\theta_j$ by composing $\mu_j$ with the map
$$(w, v)\to (w, v e^{-l_{j, 0}(w)}),$$
where $l_{j, 0}(w):= {1\over 2}\mu_j(0)+ \sum_{i= 1}^nw^ia_{ij}.$
With respect to the new trivialization $\theta_j$, the section $s_j$ 
corresponds to $\displaystyle w^je^{-l_{j, 0}(w)}$, and the weight $\varphi_j$
of $h_j$ vanish at $p_0$, together with its differential.\\
We define $\displaystyle z^j:= w^je^{-l_{j, 0}(w)}$ for $j= 1,\ldots, d$ and $z^j:= w^j$
for $j\ge d+1$; then there exists $\Omega\subset U$ such that $z= (z^j)$
is a coordinate system on $\Omega$, and thus the lemma is established
for $p= p_0$.\\

To prove the result in full generality, we consider the Taylor expansion 
of $\mu_j$ at the point $p$
$$\mu_j(w)= \mu_j(p)+ 2\, \mathrm{Re}\Big(\sum_{i= 1}^n(w_i- p_i)a_{j}^i\Big)+ {\rm higher\  order\ terms}$$
and then we define $\theta_j$ by composing $\mu_j$ with the map
$$(w, v)\to (w, v e^{-l_{j}(w)}),$$
where $l_j(w):= {1\over 2}\mu_j(p)+ \sum_{i= 1}^n(w_i- p_i)a_{j}^i.$\\

The important observation is that $\Vert l_j- l_{j, 0}\Vert\ll 1$
provided that $\displaystyle \Vert p-p_0\Vert\ll 1$ is small enough, so that 
for a well-chosen open set $\Omega\subset U$ containing $p_0$, 
the functions $\displaystyle z_j:= w_je^{-l_j(w)}$ for $j= 1,\ldots, d$ and $z_j:= w_j$
for $j\ge d+1$ will still form a coordinate system, for any $p\in \Omega$.
The lemma is therefore completely proved, since the constant $C_{\alpha, \beta}$ in \eqref{eq:coor}
can be taken to be an upper bound for the coefficients of all the derivatives of $\mu_j$ of order $|\alpha|+|\beta|$, with respect to the $(z)$--coordinates
(see the definition of $z$ as a function of $w$ above).

Of course that the coordinates $z= (z^j)$ we are constructing here
are strongly dependent on the point $p$; nevertheless, what will be important in the following computations is the uniformity of the constant $"C"$.
\end{proof}

\subsection{Asymptotic of the metric}

For further use, we develop here the term 
$\langle D^\prime s, D^\prime s\rangle$; the local expression below is 
written with respect to the coordinates constructed in lemma \ref{lem:coor}.
If $s= s_j$ is a section of $L_j$, then we have
\[\langle D^\prime s, D^\prime s\rangle= e^{-\varphi_j}\Big(dz^j+ z^j\frac{\partial \vp_j}{\partial z^p}dz^p\Big)\wedge \Big(dz^{\b j}+ z^{\b j}\frac{\partial \vp_j}{\partial z^{\b p}}dz^{\b p}\Big).\]

In particular, we have the following explicit expression of the coefficients $(g_{p\b q })$ of the
metric $\ome$ constructed in section \ref{sec2}:
\begin{eqnarray}
g_{p\b q }&= &\, \omega_{p\b q}+ e^{-\varphi_q}{\delta_{pq}+ z^q\alpha_{pq}
\over (\ep^2+ |z^q|^2e^{-\varphi_q})^{1- \tau_q}}+ 
e^{-\varphi_p}{z^{\b p}\overline {\alpha_{qp}}
\over (\ep^2+ |z^p|^2e^{-\varphi_p})^{1- \tau_p}}+ \label{eq:metr} \\
&+ & \sum_k{|z^{k}|^2 {\beta_{kpq}}
\over (\ep^2+ |z^k|^2e^{-\varphi_k})^{1- \tau_k}}+
((\ep^2+ |z^k|^2e^{-\varphi_k})^{\tau_k}-\ep^{2\tau_k})\frac{\partial^2 \vp_k}{\partial z^p \partial \bar z^q} \nonumber
\end{eqnarray}
 
\noindent The expressions of $\alpha, \beta$ above are functions of the partial derivatives of $\varphi$; in particular, $\alpha$ is vanishing at the given point $p$
at order at least 1, and the order of vanishing of $\beta$ at $p$ 
is at least 2 (this helps a lot in the computations to follow...).\\

\noindent Before we begin to estimate the curvature of $\omega_{\ep}$, let us give an estimate for the coefficients of $(g^{p\b q})$: 

\begin{lemm}
\phantomsection
\label{lem:inv}
In our setting, and for $\ep^2+|z|^2$ sufficiently small, we have at the previously chosen point $p$:
\begin{enumerate}
\item[$(i)$] For every $k \in \{1, \ldots, d\}$, \[g^{k\b k}(p)=(\ep^2+|z^k|^2)^{1-\tau_k}(1+\O((\ep^2+|z^k|^2)^{1-\tau_k}));\]
\item[$(ii)$] For every $k,l \in \{1, \ldots, d\}$ such that $p\neq q$, \[g^{k \b l}(p)=\O((\ep^2+|z^k|^2)^{1-\tau_k}(\ep^2+|z^l|^2)^{1-\tau_l}),\]
\end{enumerate}
the $\O$ being with respect to $\ep^2+|z|^2$ going to zero.
\end{lemm}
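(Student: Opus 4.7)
The plan is to read off $g_{k\bar l}(p)$ directly from formula \eqref{eq:metr} using the normalizations $\varphi_j(p)=0$, $d\varphi_j(p)=0$ and the vanishing of $\alpha$, $\beta$ at $p$ (Lemma \ref{lem:coor}), and then to invert the resulting matrix by a rescaling-plus-Neumann-series argument. Write $I_1:=\{1,\ldots,d\}$, $I_2:=\{d+1,\ldots,n\}$, and set $A_k:=(\varepsilon^2+|z^k|^2)^{1-\tau_k}$ for $k\in I_1$. Evaluating \eqref{eq:metr} at $p$: the $\alpha$- and $\beta$-terms vanish, $e^{-\varphi_j(p)}=1$, and the final sum $\sum_m\big((\varepsilon^2+|z^m|^2)^{\tau_m}-\varepsilon^{2\tau_m}\big)\partial^2\varphi_m/\partial z^k\partial\bar z^l\,(p)$ is uniformly bounded. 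This yields
\[g_{k\bar k}(p)=A_k^{-1}+O(1)\ (k\in I_1),\qquad g_{k\bar l}(p)=O(1)\ \text{otherwise},\]
uniformly in $\varepsilon$ and in $p\in\Omega$. In particular the $I_2\times I_2$ block is a bounded, uniformly positive-definite Hermitian matrix.

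To reduce the task to a $d\times d$ inversion I would use the Schur complement: $[g^{-1}]_{I_1I_1}=M^{-1}$ with $M:=g_{I_1I_1}-g_{I_1I_2}\,g_{I_2I_2}^{-1}\,g_{I_2I_1}$. Because $g_{I_2I_2}^{-1}=O(1)$ and $g_{I_1I_2},g_{I_2I_1}=O(1)$, the correction is $O(1)$, so $M$ still has diagonal entries $A_k^{-1}+O(1)$ and off-diagonal entries $O(1)$. Next, rescale by $\Lambda:=\mathrm{diag}(A_1^{1/2},\ldots,A_d^{1/2})$ and set $N:=\Lambda M\Lambda$; then $N_{kk}=1+O(A_k)$ and $N_{kl}=O(\sqrt{A_kA_l})$ for $k\neq l$, so $N=I+E$ with $\|E\|$ arbitrarily small provided $\varepsilon^2+|z|^2$ is small enough (independently of $\varepsilon$ and $p$).

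Inverting via the Neumann series $N^{-1}=\sum_{n\ge 0}(-E)^n$, and inspecting
\[(E^n)_{kl}=\sum_{m_1,\ldots,m_{n-1}}E_{km_1}E_{m_1m_2}\cdots E_{m_{n-1}l},\]
each path carries a factor $\sqrt{A_kA_l}$ together with a factor $A_{m_1}\cdots A_{m_{n-1}}$; summing over the intermediate indices and over $n$ gives $(N^{-1})_{kk}=1+O(A_k)$ and $(N^{-1})_{kl}=O(\sqrt{A_kA_l})$ for $k\neq l$. Undoing the rescaling, $M^{-1}=\Lambda N^{-1}\Lambda$ produces $(M^{-1})_{kk}=A_k(1+O(A_k))$ and $(M^{-1})_{kl}=O(A_kA_l)$ for $k\neq l$, which is exactly the content of $(i)$ and $(ii)$.

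The only delicate point I anticipate is preserving the sharp diagonal order in the Neumann step: one must use that $E_{kk}=O(A_k)$ (not merely $O(\sqrt{A_k})$), so that the first-order correction $-E_{kk}$ is genuinely of order $A_k$ and matches the announced expansion $1+O(A_k)$. Everything else amounts to bookkeeping, controlled by the uniform bounds of Lemma \ref{lem:coor}.
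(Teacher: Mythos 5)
Your proof is correct, but it inverts the matrix by a genuinely different mechanism than the paper. The paper works with determinants: it writes $g^{k\bar k}$ and $g^{k\bar l}$ as ratios of minors $\det M_{kk}/\det M$, $\det M_{kl}/\det M$, identifies the dominant monomial $\prod_i(\varepsilon^2+|z^i|^2)^{\tau_i-1}$ in each, and takes asymptotic quotients; this forces it to track which products of singular diagonal entries survive in the Laplace expansion, and even to add a caveat for the degenerate case $a_{kl}=0$, where the asymptotic equivalence must be downgraded to an $\mathcal{O}$-bound. You instead isolate the singular $d\times d$ block by a Schur complement, conjugate by $\Lambda=\mathrm{diag}(A_k^{1/2})$ so that the block becomes $I+E$ with $E_{kk}=\mathcal{O}(A_k)$ and $E_{kl}=\mathcal{O}(\sqrt{A_kA_l})$, and invert by a Neumann series whose terms you control entrywise. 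Both routes consume exactly the same input, namely the entrywise description $g_{k\bar k}(p)=A_k^{-1}+\mathcal{O}(1)$, $g_{k\bar l}(p)=\mathcal{O}(1)$ coming from \eqref{eq:metr} and the normalizations of Lemma \ref{lem:coor}, and both deliver the same conclusions. What your version buys is uniformity and the absence of case distinctions: the off-diagonal estimate $\mathcal{O}(A_kA_l)$ drops out whether or not the corresponding entry of $g$ vanishes, and the sharp diagonal expansion $A_k(1+\mathcal{O}(A_k))$ is traced to the single fact $E_{kk}=\mathcal{O}(A_k)$, which you correctly flag as the one delicate point. What the paper's version buys is brevity and the option to read off further information about $\det M$ itself, which it reuses later (e.g.\ in the estimate of $u_\varepsilon$ in section \ref{ssec3}). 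One small bookkeeping remark: in summing the Neumann series you should note explicitly that each intermediate index contributes a full factor $A_{m_i}$ (not $\sqrt{A_{m_i}}$), so that the correction to $E_{kl}$ is $\mathcal{O}\bigl(\sqrt{A_kA_l}\sum_mA_m\bigr)$ and the geometric series is controlled by $\sum_mA_m<1$; you gesture at this, and it is indeed all that is needed.
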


\begin{proof}
Let us set $M(z)=(g_{k\b l})_{k\b l}$, and put it in the form:
\[M(z)=\left( 
\begin{array}{c|c} 
A & B \\ \hline 
B & C
\end{array}\right)\]
where $B$ and $C$ are bounded matrices (when $\ep^2+|z|^2$ goes to $0$), and 
\[A=\begin{pmatrix}
(\ep^2+|z^1|^2)^{\tau_1-1}+a_{11} & a_{12} & \cdots & a_{1d} \\
a_{21} & (\ep^2+|z^2|^2)^{\tau_2-1}+a_{22} & \cdots & a_{2d} \\
&&&\\
\vdots & & \ddots & \\
&&&\\
a_{d1} & \cdots & & (\ep^2+|z^d|^2)^{\tau_d-1}+a_{dd}
\end{pmatrix} \]
where all $a_{ij}$'s are bounded at $z=p$, \textit{uniformly} in $p$ and $\ep$. \\
If we denote by $M_{ij}$ the $(n-1, n-1)$ matrix obtained from $M$ by removing the $i$-th line and the $j$-th column, we can write, for $k\in \{1, \ldots , d\}$: 
\[\det M = (\ep^2+|z^k|^2)^{\tau_k-1} \det M_{kk}+\sum_{I\subset \{1, \ldots, \hat k, \ldots, d\}} A_I(z) \prod_{i\in I}(\ep^2+|z^i|^2)^{\tau_i-1}\]
where $A_I(z)$ is (uniformly) bounded at $z=p$.\\ 
But as $\ep^2+|z|^2$ goes to $0$, it is clear that \[\det M_{kk} \underset{\ep^2+|z|^2 \to 0}{\sim}  \det C \prod_{i\in \{1, \ldots, \hat k, \ldots, d\}} (\ep^2+|z^i|^2)^{\tau_i-1}\] and therefore: 
\[\fepp^{1-\dep} \frac{\det M}{\det M_{kk}} = 1+ \O((\ep^2+|z^k|^2)^{1-\tau_k}).\]
This shows $(i)$ by taking the inverse of the previous relation.\\
The second point is very similar, because 
\[\det M_{kl}  \underset{\ep^2+|z|^2 \to 0}{\sim} a_{kl}\,  \det C \prod_{i \in \{1,\ldots, d\} \setminus \{k,l\}}(\ep^2+|z^i|^2)^{\tau_i-1} \] and 
\[ \det M  \underset{\ep^2+|z|^2 \to 0}{\sim} \det C  \prod_{i \in \{1,\ldots, d\}}(\ep^2+|z^i|^2)^{\tau_i-1} .\] To be rigorous, we must note that if $a_{kl}=0$, the preceding relation might not hold, but we just have to replace the symbol $\sim$ by $\mathcal O$, which does not affect the final result.
\end{proof}

\subsection{Computation of the curvature: the non-diagonal case}
\label{ssec1}
In this subsection and the following one, we want to check that condition \eqref{in2} is fulfilled. To this end, 
we first show that there exists a constant $C>0$, independent of $\ep$, such that 

\[   (\ep^2+|z^p|^2)^{\frac{1-\tau_p}2}(\ep^2+|z^q|^2)^{\frac{1-\tau_q}2}(\ep^2+|z^r|^2)^{\frac{1-\tau_r}2}(\ep^2+|z^s|^2)^{\frac{1-\tau_s}2}
\big|R_{\varepsilon p \b q r \b s}(z)\big| \le C \\\\ \]
for any indexes $(p, q, r, s)$ not all equal.

\subsubsection{Estimating $\frac{\partial^2 g_{p\b q}}{\partial z^r \partial \bar z^s}$}

We use here the equality (\ref{eq:metr}) in order to evaluate the expression
$\displaystyle \frac{\partial^2 g_{p\b q}}{\partial z^r \partial \bar z^s}$ at the point $p$. First we observe that we have 
\begin{eqnarray}
\frac{\partial g_{p\b q}}{\partial z^r}&= &\!\! -e^{-\varphi_q} {\delta_{pq}+ z^q\alpha_{pq}\over \epq^{1-\tau_q}}{\varphi_{q, r}}+ e^{-\varphi_q}{\alpha_{pq}\delta_{qr}+ z^q\alpha_{pq, r}\over \epq^{1-\tau_q}}- \nonumber \\
&- &\!\!(1- \tau_q)e^{-2\varphi_q} {(\delta_{pq}+ z^q\alpha_{pq})(\delta_{qr}z^{\b q}- |z^q|^2\varphi_{q, r})\over \epq^{2-\tau_q}}- e^{-\varphi_p}\frac{z^{\b p}\overline{\alpha_{qp}}} {\epp^{1-\tau_p}}   \varphi_{p, r}+ \label{first_der}\\
&+ & e^{-\varphi_p}{z^{\b p}\overline{\alpha_{qp, \b r}}\over \epp^{1-\tau_p}}- (1- \tau_p)e^{-2\varphi_p}\frac{z^{\b p}\overline{\alpha_{qp}}(\delta_{pr}z^{\b p}- |z|^2\varphi_{p, r})}{ \epp^{2-\tau_p}}+\nonumber \\
&+ & \sum_k{|z^{k}|^2 {\beta_{kpq, r}}\over (\ep^2+ |z^k|^2e^{-\varphi_k})^{1- \tau_k}}+ \sum_k\tau_ke^{-\varphi_k} {\delta_{kr}z^{\b k}- |z^k|^2\varphi_{k, r}\over (\ep^2+ |z^k|^2e^{-\varphi_k})^{1- \tau_k}}\theta_k+\nonumber \\
&+ & (\ep^2+ |z^k|^2e^{-\varphi_k})^{ \tau_k}\theta_{k, r}+ \frac{\partial \omega_{p\b q}}{\partial z^r};
\nonumber 
\end{eqnarray}
here we denote e.g. by $\theta_{i, k}$ the $z^k$-partial derivative of the function
$\theta_i$ (which is the $(p, \b q)$-component of the curvature form $\Theta_i$ of $(L_i, h_i)$ in the chosen coordinates). The previous relation holds modulo terms whose vanishing order at 
$p$ is at least 2. Next, we apply the operator
$\displaystyle {\partial\over \partial z^{\b s}}$ to the equality above, and we 
evaluate the result at $p$; we obtain the following result:
\begin{eqnarray}
\frac{\partial^2 g_{p\b q}}{\partial z^r\partial z^{\b s}}(p)&= & -\frac{\delta_{pq} \, \vp_{q, r\bar s}} {(\ep^2+|z^q|^2)^{1-\tau_q}} + {\alpha_{pq, \b s}\delta_{qr}+ z^q\alpha_{pq, r \b s}\over \fepq^{1-\tau_q}}-(1-\tau_q) {|z^q|^2 \alpha_{pq, r}\delta_{qs}\over \fepq^{1-\tau_q}}- \nonumber  \\ 
&- &\!\!(1- \tau_q){|z^q|^2\alpha_{pq, \b s}\delta_{qr} +\delta_{qr}\delta_{qp}\delta_{qs}- |z^q|^2\varphi_{q, r\b s}\delta_{pq}\over \fepq^{2-\tau_q}}+  \label{second_der}  \\
&+& (1- \tau_q)(2- \tau_q){ |z^q|^2\delta_{qp}\delta_{qr}\delta_{qs}\over \fepq^{3-\tau_q}}+ {z^{\b p}\overline{\alpha_{qp, \b rs}}+ \delta_{ps}\overline{\alpha_{qp, \b r}}\over \fepp^{1-\tau_p}}- \nonumber\\
&- & (1- \tau_p){\delta_{pr}(z^{\b p})^2\overline{\alpha_{qp, s}}+ \delta_{ps}|z^{p}|^2\overline{\alpha_{qp, \b r}}\over \fepp^{2-\tau_p}}+\nonumber \\
&+ & \sum_k{|z^{k}|^2 {\beta_{kpq, r\b s}}\over (\ep^2+ |z^k|^2)^{1- \tau_k}}+ \sum_k\tau_k{\delta_{kr}\delta_{ks}- |z^k|^2\varphi_{k, r\b s}\over (\ep^2+ |z^k|^2)^{1- \tau_k}}\theta_k-\nonumber\\
&- & \sum_k\tau_k(1-\tau_k){|z^k|^2\delta_{ks}\delta_{kr}\over (\ep^2+ |z^k|^2)^{2- \tau_k}}\theta_k+ \sum_k\tau_k{\delta_{kr}z^{\b k}\over (\ep^2+ |z^k|^2)^{1- \tau_k}}\theta_{k, \b s}+\nonumber\\
&+& \sum_k\tau_k{\delta_{ks}z^k\over (\ep^2+ |z^k|^2)^{1- \tau_k}}\theta_{k, r}+ {\cO}(1).\nonumber\\
\nonumber
\end{eqnarray}

\medskip

\noindent When multiplied with the factor
\[  (\ep^2+|z^p|^2)^{\frac{1-\tau_p}2}(\ep^2+|z^q|^2)^{\frac{1-\tau_q}2}(\ep^2+|z^r|^2)^{\frac{1-\tau_r}2}(\ep^2+|z^s|^2)^{\frac{1-\tau_s}2} \]
the only unbounded term (as $\varepsilon\to 0$) in the expression \eqref{second_der}
is the following
\[-(1- \tau_q){\delta_{qr}\delta_{qp}\delta_{qs}\over \fepq^{2-\tau_q}}+ (1- \tau_q)(2- \tau_q){ |z^q|^2\delta_{qp}\delta_{qr}\delta_{qs}\over \fepq^{3-\tau_q}},\]
as one can see by a tedious but not difficult straightforward computation. The remaining diagonal case will be treated later.\\

\subsubsection{Estimating $g^{u \b t} \frac{\partial g_{p \b u}}{\partial z^r} \frac{\partial g_{t\b q}}{\partial \bar z^s}$} 

We have done half part of work by now. It remains to compute the second part of the curvature tensor given by:
\[g^{u\bar t}\frac{\partial g_{p \b u}}{\partial z^r}\frac{\partial g_{t\b q}}{\partial \bar z^s}\]
So we first compute the first derivatives of the metric at $p$:

\begin{eqnarray*}
\frac{\d g_{p\bar u}}{\d z^r}(p)&=& -(1-\tau_p) \frac{\delta_{pu}\delta_{ur}z^{\bar p}}{\fepp^{2-\tau_p}}+\frac{z^{\bar u} \alpha_{pu, r}}{\fepu^{1-\tau_u}}+\\
&+&\frac{z^{\bar p} \overline{\alpha_{up,\bar r}}}{\fepp^{1-\tau_p}}+\tau_r \frac{z^{\bar r}}{\fepr^{1-\tau_r}}\theta_r+\mathcal O(1)
\end{eqnarray*}
We want to estimate
\[\frac{\partial g_{p \b u}}{\partial z^r}(p)\frac{\partial g_{t\b q}}{\partial \bar z^s}(p)\] which is equal to:
\begin{eqnarray*}
&&\delta_{pu}\delta_{ur}\delta_{tq}\delta_{qs} (\tau_p-1)(\tau_s-1) \frac{\bar z^p z^q}{\fepp^{2-\tau_p}\fepq^{2-\tau_q}}+\\
&+&\delta_{pur} \frac{z^{\bar p} z^q}{\fepp^{2-\tau_p}\fepq^{1-\tau_q}}  \times \textrm{\small (bd)} +\delta_{pur} \frac{z^{\bar p} z^t}{\fepp^{2-\tau_p}\fept^{1-\tau_t}}  \times \textrm{\small (bd)} +\\
&+&\delta_{pur} \frac{ z^{\bar p}  z^s}{\fepp^{2-\tau_p}\feps^{1-\tau_s}}  \times \textrm{\small (bd)} +\delta_{tqs}\frac{z^{ q} z^{\bar u}}{\fepq^{2-\tau_q}\fepu^{1-\tau_u}}  \times \textrm{\small (bd)}+\\
&+&\delta_{tqs}\frac{z^{ q} z^{ \bar p}}{\fepq^{2-\tau_q}\fepu^{1-\tau_p}}  \times \textrm{\small (bd)}+\delta_{tqs}\frac{z^{ q} z^{\bar r}}{\fepq^{2-\tau_q}\fepr^{1-\tau_r}}  \times \textrm{\small (bd)}+\\
&+& \frac{ z^{\bar u}  z^q}{\fepu^{1-\tau_u}\feps^{1-\tau_q}}  \times \textrm{\small (bd)} + \frac{ z^{\bar u}  z^t}{\fepu^{1-\tau_u}\fept^{1-\tau_t}}  \times \textrm{\small (bd)} +\\
&+& \frac{ z^{\bar u}  z^s}{\fepu^{1-\tau_u}\feps^{1-\tau_s}}  \times \textrm{\small (bd)} +\frac{ z^{\bar p}  z^q}{\fepp^{1-\tau_p}\fepq^{1-\tau_q}}  \times \textrm{\small (bd)} +\\
&+& \frac{ z^{\bar p}  z^t}{\fepp^{1-\tau_p}\fept^{1-\tau_t}}  \times \textrm{\small (bd)} + \frac{ z^{\bar p}  z^s}{\fepp^{1-\tau_p}\feps^{1-\tau_s}}  \times \textrm{\small (bd)} +\\
&+& \frac{ z^{\bar r}  z^q}{\fepr^{1-\tau_r}\fepq^{1-\tau_q}}  \times \textrm{\small (bd)} + \frac{ z^{\bar r}  z^t}{\fepr^{1-\tau_r}\fept^{1-\tau_t}}  \times \textrm{\small (bd)} +\\
&+& \frac{ z^{\bar r}  z^s}{\fepr^{1-\tau_r}\feps^{1-\tau_s}}  \times \textrm{\small (bd)} +\frac{\partial g_{p \b u}}{\partial z^r}(p) \times \textrm{\small (bd)} + \frac{\partial g_{t\b q}}{\partial \bar z^s}(p) \times \textrm{\small (bd)}.\\
\end{eqnarray*}
where {\small{(bd)}} means “bounded terms".\\
Now we have to multiply with \[\fepp^{\frac{1-\tau_p}{2}} \fepq^{\frac{1-\tau_q}{2}} \fepr^{\frac{1-\tau_r}{2}} \feps^{\frac{1-\tau_s}{2}} g^{u \bar t }\] having in mind, thanks to lemma \ref{lem:inv} that \[g^{u \bar t }= \mathcal O(\fepu^{1-\tau_u}\fept^{1-\tau_t})\] whenever $u \neq t$, and \[g^{u \bar u }= \cO(\fepu^{1-\tau_u})).\]
Again, a straightforward computation (which will not be detailed here) 
shows then that each of those terms becomes bounded, except in the case $p=u=r=t=q=s$ with the term
\[(\tau_p-1)^2 \frac{ |z^p|^2}{\fepp^{2(2-\tau_p)}}\]
We may observe that we need the hypothesis $\tau_i \le 1/2$ for all $i$ in this computation.\\

This finishes to prove that the non-diagonal terms of the curvature of $\omega_{\ep}$ are uniformly bounded.

\subsection{Computation of the curvature: the diagonal case}
\label{ssec2}
By the considerations and computations of the previous section, we only have to show that the following expression is bounded from below when $\ep$ goes to zero:
\[\fepp^{2(1-\tau_p)}(1- \tau_p)\left(\frac{1}{\fepp^{2-\tau_p}}- \frac{(2- \tau_p)|z^p|^2}{ \fepp^{3-\tau_p}}+g^{p \bar p} \frac{(1-\tau_p)|z^p|^2}{\fepp^{2(2-\tau_p)}}\right)\]
whenever $p=q=r=s$. But this term equals:
\[(1-\tau_p)^2 \fepp^{-1-\dep}\left(\left[g^{p\b p}\fepp^{\dep-1}-1\right] |z^p|^2+\frac{\ep^2}{1-\dep} \right)\]
But by lemma \ref{lem:inv}, we know that $g^{p\b p}\fepp^{\dep-1}-1=\O(\fepp^{1-\dep})$, so that our expression becomes
\[(1-\tau_p)^2 \fepp^{-1-\dep}\frac{\ep^2}{1-\dep}+\O\left(\fepp^{1-2\dep}\right)\]
which is bounded \textit{from below} since $\dep \le 1/2$.\\

In conclusion, this shows that we have
$$|v^p|^2_{\ome}|w^p|^2_{\ome}R_{\varepsilon p \ol p p \ol p}\ge -C$$
for any vectors $v$ and $w$ of $\ome$-norm equal to 1, hence the relation
\eqref{in2} is established.

\subsection{\texorpdfstring{Estimating $\Delta_{\ome} \fe$}{Estimating the laplacian of Fe}}
\label{ssec3}
Let us fist fix some notations. $X$ is endowed with some Kähler form $\omega$, and for each $\ep>0$ small enough, we defined in \eqref{6} the Kähler form $\omega_{\ep}=\omega+dd^c \psi_{\ep}$ where $\psi_{\ep}= \frac 1 N \sum_j  \chi_{j,\ep}(\ep^2+|s_j|^2)$. 
As we saw in the introduction, we want to solve the following Monge-Ampère equation: 
\[(\ome+\iddb \vpe)^n = \frac{e^{f+\lambda (\vpe + \psi_{\ep})} }{\prod_{j=1}^d(\ep^2+|s_j|^2)^{1-\dej}} \om^n\]
for some function $f\in \mathscr C^{\infty}(X)$, and $\lambda \in \{0,1\}$.
Having in mind Proposition \ref{prop}, we may rewrite the last equation under the following form:
\[(\ome+\iddb \vpe)^n = e^{\fe+\lambda (\vpe + \psi_{\ep})} \ome^n\]
where \[\fe=f+\log \left(\frac 1 {\prod_{j=1}^d(\ep^2+|s_j|^2)^{1-\dej}} \frac{\om^n}{\ome^n} \right) \]
and, according to the inequality $(iv)$ occuring in the assumptions of Proposition \ref{prop}, we need to show that $\Delta_{\ome} \fe \ge -C$ for some constant $C>0$ independent of $\ep$. \\

First of all, we shall treat the term $\Delta_{\ome} f$. Indeed, from equality \eqref{met}, we easily deduce the existence of a constant $\gamma\in]0, 1[$ independent of $\ep$ such that 
\begin{equation}
\label{in:om}
\ome \ge \gamma \om
\end{equation}
Moreover, there exists $C>0$ such that $\iddb f \ge -C \om$ so that \[0 \le  \tr_{\ome}(\iddb f+C \om) \le \gamma^{-1}(Cn+ \tr_{\om}(\iddb f))\]
and thus
\[-C \gamma^{-1} \le \tr_{\ome}(\iddb f) \le  \gamma^{-1}(Cn+ \tr_{\om}(\iddb f)) \]
which shows that $\Delta_{\ome} f=\tr_{\ome}(\iddb f)$ is uniformly bounded.\\

Furthermore, it is rather easy to see that the term inside the logarithm, say $u_{\ep}$ may be written (in the previously chosen $(z)$-coordinates):
\[u_{\ep}=C(z)+ \sum_{I \subsetneq \{1, \ldots, n\} } A_I(z) \prod_I (\ep^2+|z^i|^2)^{1-\tau_i}\]
where $C(z)$ and $A_I(z)$ are sums of terms in the form \[B(z) \prod_{j\in J} \left[(\ep^2+|z_j|^2e^{-\vp_j})^{\tau_j}-\ep^{\dej}\right]\ \prod_{k\in K, K \cap I = \emptyset}\frac{z^k \alpha_k}{(\ep^2+|z^k|^2e^{-\vp_k})^{\lambda_k(1-\tau_k)}}\prod_{l\in L}\frac{|z^l|^2 \beta_l}{(\ep^2+|z^l|^2e^{-\vp_l})^{1-\tau_l}}\] where $B(z)$ is smooth independant of $\ep$, $\alpha_k$ is smooth and zero at $p$, $\beta_l$ is smooth and vanishes at order at least $2$ at $p$, and where $\lambda_k \in \{0, 1/2\}$. Indeed, the terms in the metric of the form  $\frac{z^k \alpha_k}{(\ep^2+|z^k|^2e^{-\vp_k})^{1-\tau_k}}$ may appear once or squared in the summands of the determinant $\ome^n$, but then we simplify them with the term $\prod_{i=1}^d (\ep^2+|z^i|^2)^{1-\tau_i}$ which is always possible, because the term $(\ep^2+|z^k|^2)^{-(1-\tau_k)}$ can't appear in those summands according to the alternating properties of the determinant. \\

So as to compute the laplacian of (the log of) this term, we do it pointwise, so that the first thing to do is using the normal coordinates $(w)$ in which $\ome$ is tangent at order $1$ to the flat metric. Indeed, in these coordinates, and at our point, we have for every $f>0$ and any real function $g,h$:
\[\Delta_{\ome} \log (f(w))=  \frac 1 f \Delta_{\ome}f(w) - \frac{1}{f^2} |\nabla f|_{\ome}^2\]
and
\[\Delta_{\ome} (gh) =  g \Delta_{\ome} h + h \Delta_{\ome} g + \la \nabla g, \nabla h\ra_{\ome} \]
where $\nabla f(w)= \sum \frac{\d f}{\d w^i} w^i $ is the usual (euclidian) gradient. \\\\

By lemma \ref{lem:inv}, we see that $u_{\ep}(p)$ admits a positive lower bound independent of $\varepsilon$ and in particular, as $u_{\ep}$ is clearly uniformly upper bounded, we obtain a constant $C>0$ independent of $\ep$ such that:
\begin{equation}
\label{uep}
|F_{\ep}| \le C
\end{equation}
The uniform minoration of $u_{\ep}(p)$ shows that it is enough to check that the functions 
appearing in the products have bounded $\ome$-laplacian and that their gradient's norm $|\nabla|^2_{\ome}$ is bounded too. Of course, we need the $(z)$-coordinates now, so we use the following rules:
\begin{eqnarray*}
\frac{\d^2 f}{\d w^i \d \bar w^i} &=& \sum_{k,l} \frac{\d z^k}{\d w^i} \overline{\frac{\d z^l}{\d w^i}} \frac{\d^2 f}{\d z^k \d \bar z^l}\\
\frac{\d f}{\d w^i}\overline{\frac{\d g}{\d w^i}} &=& \sum_{k,l} \frac{\d z^k}{\d w^i} \overline{\frac{\d z^l}{\d w^i}} \frac{\d f}{\d z^k}\overline{\frac{\d g}{\d z^l}}
\end{eqnarray*}
having still in mind that there exists some constant $C>0$ such that:
\[\left|\frac{\d z^k}{\d w^i} \right|^2\le C (\ep^2+|z^k|^2)^{1-\tau_k} .\]
Let us finally make the computations: 
\begin{eqnarray*}
\frac{\d^2 }{\d z^k \d \bar z^k}(\ep^2+|z^j|^2e^{-\vp_j})^{\alpha_j}(p)&=&\alpha_j \delta_{jk}\Big[(\ep^2+|z^j|^2)^{\alpha_j-1}+(\alpha_k-1) |z^j|^2 (\ep^j+|z^j|^2)^{\alpha_j-2}\Big] - \nonumber \\
&-& \alpha_j |z^j|^2 (\ep^2+|z^j|^2)^{\alpha_j-1} \vp_{j, k\bar k} \nonumber
\end{eqnarray*}
\[\left| \frac{\d }{\d z^k}(\ep^2+|z^j|^2e^{-\vp_j})^{\alpha_j}(p)\right|^2=\delta_{jk} \alpha_j^2 |z^j|^2 (\ep^2+|z^j|^2)^{2(\alpha_j-1)}\]
for $\alpha_k \in \{\tau_k, 1-\tau_k\}$.\\
The first term is a $\O\left((\ep^2+|z^k|^2)^{\alpha_k-1}\right)$ whereas the second is a $\O\left((\ep^2+|z^k|^2)^{2\alpha_k-1}\right)$. As $1-\tau_k+\alpha_k-1= \alpha_k-\tau_k \in \{0, 1-2\tau_k\}$ and $1-\tau_k+2\alpha_k-1=2\alpha_k - \tau_k \in \{\tau_k, 2-3 \tau_k \}$, and as $\tau_k \le 1/2$, our initial terms become bounded after being multiplied by $\left|\frac{\d z^k}{\d w^i} \right|^2$.\\ \\

Now we have to investigate the laplacians and gradient of $\frac{z^k \alpha_k}{(\ep^2+|z^k|^2e^{-\vp_k})^{\lambda_k(1-\tau_k)}}$. As $\lambda_k \in \{0, 1/2\}$ and $1-2\tau_k \ge 0$, $\frac{z^k}{(\ep^2+|z^k|^2)^{\lambda_k(1-\tau_k)}}$ is always bounded, so that the vanishing of $\alpha_k$ at $p$ guarantees that the gradient of our term is bounded. As for the second derivative, a computation leads to:
\begin{flushleft}
$\displaystyle \frac{\partial ^2}{\d z^r \d z^{\bar r}} \frac{z^k \alpha_k}{(\ep^2+|z^k|^2e^{-\vp_k})^{\lambda_k(1-\tau_k)}}(p) =$
\end{flushleft}
\[ \delta_{kr}\Big( \frac{\alpha_{k, \bar r}}{(\ep^2+|z^k|^2)^{\lambda_k(1-\tau_k)}}-\lambda_k(1-\tau_k)\frac{(z^k)^2 \alpha_{k,  r}+|z^k|^2 \alpha_{k, \bar r}}{(\ep^2+|z^k|^2)^{\lambda_k(1-\tau_k)+1}}\Big) + \frac{z^k \alpha_{k, r\bar r}}{(\ep^2+|z^k|^2)^{\lambda_k(1-\tau_k)}} 
\medskip \medskip \]

\noindent As we already observed, the last term on the right is bounded, and as for the other one, we may multiply it with $(\ep^2+|z^k|^2)^{(1-\tau_k)}$, so that they clearly become bounded, too.\\

To finish the computation, we have to deal with $\frac{|z^l|^2 \beta_l}{(\ep^2+|z^l|^2e^{-\vp_l})^{1-\tau_l}}$. As $\beta_l$ vanishes up to order $2$ at least, the gradient at $p$ is zero, and the second derivatives are equal to 
\[\frac{\partial ^2}{\d z^r \d z^{\bar r}} \frac{|z^l|^2 \beta_l}{(\ep^2+|z^l|^2e^{-\vp_l})^{1-\tau_l}}(p) =\frac{|z^l|^2 \beta_{l, r \bar r}}{(\ep^2+|z^l|^2)^{1-\tau_l}}\]
which are bounded.\\

This finishes the proof of the existence of a constant $C>0$ independent of $\ep$ such that $|\Delta_{\ome} \fe| \le C$. \\\\

We may now notice that inequalities \eqref{4}, \eqref{in:om} and \eqref{uep} show that the sought estimates $(ii)$ of Proposition \ref{prop} are satisfied. Therefore, according to this proposition, we are left to obtain $\cC^0$ estimates for $\vpe$ in order to get the Laplacian estimates.

\section{\texorpdfstring{From $\mathscr C^0$ to $\mathscr C^{2, \alpha}$ estimates: proof of the Main Theorem}{From C0 to C2+ estimates: proof of the Main Theorem}}
\label{sec:est}
\subsection{\texorpdfstring{$\mathscr C^0$ estimates}{C0 estimates}}
\label{subsec:est0}
We denote $\omega_{\ep}'=\omega_{\ep}+dd^c \vp_{\ep}$ the metric obtained by solving (in $\vp_{\ep}$) the Monge-Ampère equation
\[(\ome+dd^c \vp_{\ep})^n=e^{F_{\ep}+\lambda (\vpe + \psi_{\ep})} \ome^n \]
where $\lambda \in \{0, 1\}$. If $\lambda=0$, we choose $\vp_{\ep}$ to be the solution of the previous equation which satisfies
\[\int_X \vpe \,  dV_{\omega} =0\]\\
We are first wishing to get $\mathscr C^0$ estimates for $\vp_{\ep}$.\\

In the case where $\lambda=1$, we may follow e.g. \cite{Yau78}, \cite{Siu}; we will briefly recall next the argument. We choose a point $x_0$ where $\vpe$ atteins its maximum, so that $\det(\ome+ \iddb \vpe) \le \det(\ome)$ and thus $(F_{\ep}+\psi_{\ep})(x_0)+\vpe(x_0)\le 0$ and therefore $\sup_{X} \vpe \le \sup_X( |\fe|+|\psi_{\ep}| )\le M$, where $M>0$ is a constant which does not depend of $\ep$ as we already mentioned in \eqref{4} and \eqref{uep}. Moreover, we can apply the same argument with $\inf_X \vpe$, and we thus have our \textit{a priori} $\mathscr C^0$ estimates.\\

In the case where $\lambda=0$, we need a slightly more involved result coming from pluripotential theory. This is the following theorem 
basically due to S. Ko\l odziej
\cite{Kolo} (see also \cite{EGZ}, \cite{Zha}, \cite{SonTia}, \cite{Bl}, \cite{tos1}, \cite{tos2} for substantial improvements and interesting applications of this result):
\begin{theo}
Let $(X, \omega)$ be a compact K\"ahler manifold, and let 
$\displaystyle (F_\ep)_{\ep> 0}$ be a family of smooth functions, such that  
$\int_X e^{F_\ep} \om^n=\int_X \om^n$ and such that there exists 
two constants $\delta_0> 0$ and $C_0> 0$ for which we have
$$\int_Xe^{(1+ \delta_0)F_\ep} \om^n\le C_0,$$
for any $\varepsilon> 0$. 
Then any solution $\rho_\ep$ to the Monge-Amp\`ere equation
 \[(\om+\iddb \rho_\ep)^n = e^{F_\ep} \om^n\]
 satisfies 
 \[\mathrm{osc}\,  \rho_\ep:=\sup_X \rho_\ep - \inf_X \rho_\ep  \le C, \]
where $C>0$ depends only on $(X, \omega)$ and on $(\delta_0, C_0)$ above.
\end{theo}
We want to apply this result to the pair $(X, \omega)$; we remark that the equation 
$(\star_\varepsilon)$ can be written as
\[\om_{\rho_\varepsilon}^n=\frac{e^{f}}{\prod_{j=1}^d(\ep^2+|s_j|^2)^{1-\tau_j}}\om^n,\]
where $\displaystyle \rho_\varepsilon:= \psi_\ep+ \varphi_\ep$. By adjusting 
$\psi_\ep$ with a constant (whose variation with respect to $\varepsilon$ is bounded), we can assume that 
$$\displaystyle \int_X\rho_\varepsilon dV_\omega= 0.$$
By the previous theorem, we infer that
\[\sup_X |\rho_\varepsilon| \le \widetilde{C_0}\]
where $\widetilde{C_0}>0$ is a constant independent on $\ep$. Therefore we deduce our $\mathscr C^0$ estimate: 
\[\sup_X |\vpe| \le C_0.\] 
given the expression of $\psi_\varepsilon$ in \ref{sec2}.

\subsection{Laplacian estimates}

All we have to do is to combine the results of subsections \ref{ssec1}, \ref{ssec2}, \ref{ssec3} with Proposition \ref{prop}; actually, this proves the Main Theorem. For the sake of completeness, we shall explain in the next subsection why the solution $\omi$ to our equation $(\textrm{MA})$ (or equivalently $(\star_0)$) is smooth outside the support of $D$. As we already mentioned in the introduction, this result is already known.

\subsection{\texorpdfstring{$\mathscr C^{2, \alpha}$ estimates}{C2+ estimates}}

Remember that we wish to extract from the sequence of smooth metrics $\ome + \iddb \vpe$ some subsequence converging to a smooth metric on $\displaystyle X_0=X\setminus (\cup_{j\in J} Y_j)$.
In order to do this, we need to have \textit{a priori} $\mathscr C^k$ estimates for all $k$. The usual bootstrapping argument for the Monge-Ampère equation allows us to deduce those estimates from the $\mathscr C^{2,\alpha}$ ones for some $\alpha\in ]0,1[$. The crucial fact here is that we have at our disposal the following \textit{local} result, taken from \cite{Gilb} (see also \cite{Siu}, \cite[Theorem 5.1]{Bl}), which gives interior estimates. 
\begin{theo}
\phantomsection
\label{ek}

Let $u$ be a smooth psh function in an open set $\Omega \subset \CC^n$ such that $f:= \det(u_{i \bar j})>0$. Then for any $\Omega' \Subset \Omega$, there exists $\alpha \in]0, 1[$ depending only on $n$ and on upper bounds for $||u||_{\mathscr C^{0}(\Omega)}$, $\sup_{\Omega} \, \Delta \vp, ||f||_{\mathscr C^{0, 1}(\Omega)}, 1/\inf_{\Omega} \, f$, and $C>0$ depending in addition on a lower bound for $d(\Omega', \d \Omega)$ such that: 
\[||u||_{\mathscr C^{2, \alpha}(\Omega')}\le C.\]
\end{theo}

In our case, we choose some point $p$ outside the support of the divisor $D$, and consider two coordinate open sets $\Omega' \subset \Omega$ containing $p$, but not intersecting $\mathrm{Supp}(D)$. In that case, we may find a smooth Kähler metric $\omega_p$ on $\Omega$, and a constant $M>0$ such that for every $\ep>0$, we have $M^{-1} \om_p \le \omega_{\ep |\Omega} \le M \om_p$. Then one may take $u=\vpe$ in the previous theorem, and one can easily check that there are common upper bounds (i.e. independent of $\ep$) for all the quantities involved in the statement. This finishes to show the existence of uniform \textit{a priori} $\mathscr C^{2, \alpha}(\Omega')$ estimates for $\vpe$.\\

As we mentioned earlier, the ellipticity of the Monge-Ampère operator automatically gives us local \textit{a priori} $\mathscr C^k$ estimates for $\vpe$, which ends to provide a smooth function $\vp$ on $X_0$ (extracted from the sequence $(\vpe)_{\ep}$) such that $\omi=\om+\ddc \vp$ defines a smooth metric outside $\mathrm{Supp}(\D)$ satisfying 
\[ (\om+\ddc \vp)^{n} =\frac{e^{\lambda\vp+f}}{\prod_{j\in J}|s_j|^{2(1-\tau_j)}}\,\om^{n}. \]
Moreover, the strategy explained at the end of section \ref{sec:obs} and set up all along the section shows that this metric $\vp$ has cone singularities along $\D$, so this finishes the proof of the main theorem up to some detail. \\

Indeed, we have showed that on can extract a subsequence of $(\vpe)$ converging locally uniformly on $X\setminus \Supp(D)$ to a bounded function $\vp$ having cone singularities along $D$. We still need to make sure that $\vp$ is the solution of the Monge-Ampère equation (MA) we started from. Actually, this is a consequence of Ko\l odziej's stability theorem as the Monge-Ampère measures $\omega_{\vpe}^n$ converge in $L^p$ norm (for some $p>1$) to $e^{f+\lambda \vp}\mu_D$. We could also have used the following elementary argument: on $X\setminus \Supp(D)$, the local uniform convergence of $(\vpe)$ to $\vp$ ensures that $\om_{\vp}^n=e^{f+\lambda \vp} \mu_D$ on that set. Moreover, as $\vp$ is globally bounded on $X$, its Monge-Ampère does not charge any pluripolar set hence $\vp$ satisfies (MA) on the whole $X$, which concludes. 

\section{Kähler-Einstein metrics on log-Fano manifolds}
\label{logfano}

In this paragraph, we finish the program initiated in the previous sections showing that, roughly speaking, every Kähler-Einstein on a (log-smooth klt) pair $(X,\D)$ (in an appropriate meaning) has cone singularities along $\D$. We proved it when $K_X+\D$ ample or flat, and we will show it when $K_X+\D$ is anti-ample (that is $-(K_X+\D)$ ample).\\
So we have in mind the following equation: 
\[(\om+\ddc \vp)^n = e^{-\vp}\mu_{\D}\]
where \[\mu_{\D}= \frac{e^f}{\prod_{j\in J}|s_j|^{2(1-\tau_j)}} \om^n\] with the same notations as in the introduction. This volume form (or any equivalent volume form) will be called a cone volume form, $D$ being understood. \\

As is well-known, we cannot simply regularize this equation as in the previous sections and use \ref{prop} so as to obtain our solution, because there need not to be $\mathscr C^0$ estimates anymore. Even in the smooth case, the equation $(\om + dd^c \vp)^n = e^{f-\vp}\om^n$ may not have solutions, else there would always exist positively curved Kähler-Einstein metrics on Fano varieties, which is not the case. Therefore we will restrict ourselves to show that whenever a solution of the former Monge-Ampère equation exists, then it has cone singularities along the divisor.\\

Before we go any further, let us recall the definition of the class $\mathcal E(X,\om)$, which is composed of $\om$-psh functions $\vp$ such that their non-pluripolar Monge-Ampère $(\om + \iddb \vp)^n$ has full mass $\int_X \om^n$ (cf. \cite{GZ2}, \cite{BEGZ}). The main result of this section is the following:

\begin{theo}
\label{thm:logfano}
Let $X$ be a compact Kähler manifold and $\D=\sum (1-\tau_j) D_j$a divisor with simple normal crossings satisfying $0<\tau_j\le 1/2$, let $\mu_{\D}$ be a cone volume form, $\psi$ a bounded quasi-psh function, and $\omega$ a Kähler form on $X$. Then any solution $\vp \in \mathcal E(X,\om)$ of
\[(\om+ \ddc \vp)^n = e^{-\psi} \mu_{\D}\]
is Hölder-continuous and the metric $\om+ \ddc \vp$ has cone singularities along $\D$.
\end{theo}

\begin{proof}
We already know that $\vp$ is Hölder continuous: indeed as the rhs is in $L^p(dV_{\om})$ for some $p>1$, there exists a Hölder-continuous solution by Ko\l odziej's theorem (cf. \cite{Kolo,Kolo2}). But this equation admits a unique solution in $\mathcal E(X,\om)$ up to normalization, thanks to \cite[Theorem 3.3]{GZ2}, so that $\vp$ is necessarily Hölder continuous.

To control the singularities of the solution, one needs to write it as a limit of the regularized Monge-Ampère equations, which we will control thanks to the previous estimate \ref{prop}.
So we start by using Demailly's regularization theorem to approximate $\psi$ by a sequence $(\psi_{\ep})_{\ep}$ of bounded $2\omega$-psh functions, and we consider the following equation
\begin{equation}
\label{eqq2}
(\om+ \ddc \vp_{\ep})^n = e^{-\psi_{\ep}-G_{\ep}}\om^n 
\end{equation}
where $G_{\ep}= \sum_j a_j \log(|s_j|^2+\ep^2)$ (up to an additive normalizing constant guaranteeing that the previous equation has a solution; this constant is easily seen to be bounded when $\ep\to 0$), and $\vpe$ is the normalized solution given by Aubin-Yau's theorem. To study the asymptotic of $\om+\ddc \vpe$, one needs to compare this metric with the regularized cone metric $\ome= \om + \ddc \chi_{\ep}$ given in section \ref{sec2} (we changed the notations a bit). Therefore we rewrite equation \eqref{eqq2} in the form:
\begin{equation}
\label{eqq1}
(\om_{\ep}+ \ddc (\vp_{\ep}-\chi_{\ep}))^n = e^{-\psi_{\ep}+F_{\ep}}\ome^n 
\end{equation}
where $F_{\ep}=-\log\left( \frac{\prod_j (|s_j|^2+\ep^2)^{a_j}\ome^n}{\om^n}\right)$. Using the results of section \ref{ssec3}, we see that $\Fe$ is smooth and uniformly bounded so as its $\ome$-laplacian. Now we want to apply Proposition \ref{prop} with $\psi_1 =  F_{\ep}$ and $\psi_2= \psi_{\ep}$ and the Kähler form $\ome$, whose holomorphic bisectional curvature is known (thanks to section \ref{sec:curv}) to admit a uniform lower bound. As $\ome \ge \gamma^{-1} \om$ for some $\gamma >0$, $\psi_{\ep}$ is $2\gamma\ome$-psh and thus $\Delta_{\ome} \psi_{\ep} \ge -2n \gamma$. Therefore, it only remains to get $\mathscr C^0$ estimates on $\vpe$, for which are going to use Ko\l odziej's results. To this end, we use the form \eqref{eqq2} of our equation (else, the estimates we would obtain by Ko\l odziej's theorem could depend on $(X,\ome)$ and not just on a lower bound for the holomorphic bisectional curvature of $\ome$). Of course, equations \eqref{eqq1} and \eqref{eqq2} are \textit{the same equation}, but in \eqref{eqq2}, the reference metric $\om$ is fixed, and it is clear that $f_{\ep}:=e^{-\psi_{\ep}-G_{\ep}}$ satisfies $||f_{\ep}||_{L^p(dV_{\om})} \le M$ for some $p>1$ and $M>0$ not depending on $\ep$. We may now apply Ko\l odziej's result to obtain $C>0$ satisfying $\sup_X |\vpe| \le C$. As $\chi_{\ep}$ is uniformly bounded, then $\vpe-\chi_{\ep}$ is uniformly bounded, so that we have the desired $\mathscr C^0$ estimates for the solutions of \eqref{eqq1}.\\

Therefore, we may now apply Proposition \ref{prop}, and we get a constant $A>0$ independent of $\ep$ such that
\[A^{-1}  \ome \le \om + \ddc \vpe  \le A  \, \ome.\]
By Evans-Krylov theory (cf. theorem \ref{ek}), we then get interior (this means here local and outside $\Supp(\D)$) estimates at any order. Therefore we may extract from $\vpe$ a subsequence converging weakly to $\vp_{\infty}$ on $X$, such that $\vp_{\infty}$ is smooth outside $\D$, and satisfies
 \[(\om+ \ddc \vp_{\infty})^n = e^{-\psi}\mu_{\D} \]
 and also
 \[A^{-1}  \om_0 \le \om + \ddc \vp_{\infty}  \le A  \om_0\]
 where $\om_0$ is a metric with cone singularities.
As $\vp_{\infty}$ is bounded thanks to the $\mathscr C^0$ estimates, $\vp \in \mathcal E(X,\om)$ and 
 \[(\om+ \ddc \vp_{\infty})^n = (\om+ \ddc \vp)^n \]
so the uniqueness theorem of Guedj-Zeriahi \cite[Theorem 3.3]{GZ2} allows us to conclude that $\vp_{\infty}-\vp$ is constant. This finishes to prove that $\om + \ddc \vp$ has cone singularities along $\D$. \\
\end{proof}

Let $X$ be a smooth complex projective variety, $\D= \sum (1-\tau_j) \D_j$ an effective $\R$-divisor with simple normal crossing support and such that for all $j$, $\tau_j\in ]0,1[$. We choose sections $s_j$ of hermitian line bundles $(L_j,h_j)$ such that $\D_j=\mathrm {div} (s_j)$. If $-(K_X+\D)$ is ample, we call the pair $(X,\D)$ a \textit{log-Fano manifold}. \\
Under this context, we propose the following notion of Kähler-Einstein metric for $(X,\D)$:
\begin{defi}
\label{def:ke}
Let $(X,\D)$ be a log-Fano manifold. A Kähler-Einstein metric for $(X,\D)$ is defined to be a Kähler metric $\om$ on $X_0$ satisfying the following properties:
\begin{enumerate}
\item[$\bullet$] $\Ric \om =  \omega$ ;
\item[$\bullet$] There exists $C>0$, and $\mu_{\D}$ a cone volume form on $X_0$ such that:
\[ C^{-1} \mu_{\D} \le \om^n \le  C \mu_{\D}  \]
\end{enumerate}
\end{defi}

Rephrasing a result of Berman appearing in an earlier version of \cite{rber}, we have the following link between Kähler-Einstein metrics and Monge-Ampère equations, which shows that our definition of Kähler-Einstein metric for a log-Fano manifold coincides with the general definition for a pair as given by \cite{BBEGZ}:

\begin{prop}
\label{prop:ke2}
Let $(X,\D)$ be a log-Fano manifold. We choose a Kähler metric $\om_0\in -c_1(K_X+\D)$. Then any Kähler-Einstein metric $\om$ for $(X,D)$
extends to a Kähler current $\om=\om_0+\ddc \vp$ on $X$ where $\vp \in PSH(X,\om_0)\cap L^{\infty}(X)$ is a solution of 
\[(\om_0+\ddc \vp)^n = e^{-\vp-\vp_{\D}}\om_0^n\]
where $\vp_{\D}=\sum _{j \in J} (1-\tau_{j}) \log |s_{j}|^2+f$ for some $f\in \mathscr C^{\infty}(X)$. 
\end{prop}

\begin{proof} For the convenience of the reader, we give the proof of the proposition in terms of $\om$-psh functions.
Let us define a smooth function $\psi$ on $X_0$ by:
 \[\psi:=-\log \left( \frac{\prod_{j\in J} |s_j|^{2(1-\tau_j)} \om^n}{\om_0^n} \right)\]
Wet set $\Theta(D):=\sum a_i \Theta(D_i)$, where $\Theta(D_i)$ is the curvature of $(L_i, h_i)$. By assumption, $\psi$ is bounded on $X_0$, and we know that on this set, 
\[\ddc \psi = \om-\Ric \om_0^n  +\Theta(\D)\]
so that $\psi$ is $M\om_0$-psh for some $M>0$ big enough. As it is upper bounded, it extends to a (unique) $M\om_0$-psh function on the whole $X$, which we will also denote by $\psi$. 
Let now $f$ be a smooth potential on $X$ of $\Ric \om_0^n -\om_0-\Theta(\D)$. It is easily shown that $\vp:=\psi+f$ satisfies $\om_0+\ddc \vp = \om$ on $X_0$. Moreover $\vp \in PSH(X,\om_0)\cap L^{\infty}(X)$. Therefore its Monge-Ampère $(\om_0+\ddc \vp)^n$ does not charge any pluripolar set (and in particular it does not charge $\D$), and thus it satisfies the equation
\begin{eqnarray*}
(\om_0+\ddc \vp)^n&=&\frac{e^{-\vp+f}\om_0^n}{\prod_{j\in J} |s_{j}|^{2(1-\tau_j)}}\\
&=&e^{-\vp-\vp_{\Delta}}\om_0^n
\end{eqnarray*}
on the whole $X$, with the notations of the statement. 
\end{proof}

\begin{coro}
On a log-Fano manifold $(X,\D)$ such that the coefficients of $\D=\sum (1-\tau_j) D_j$ satisfy $0<\tau_j\le 1/2$, any Kähler-Einstein metric has cone singularities along $\D$. 
\end{coro}

\begin{proof}
Indeed, by the previous proposition, such a metric extends to a current $\om=\om_0+\ddc \vp$ on $X$ where $\vp \in PSH(X,\om_0)\cap L^{\infty}(X)$ is a solution of 
\[(\om_0+\ddc \vp)^n = e^{-\vp-\vp_{\D}}\om_0^n\]
Therefore, applying theorem \ref{thm:logfano} to $\psi=\vp$ and $\mu_{\D}=e^{-\vp_{\D}}\om_0^n$, we are done. 
\end{proof}

Let now $X$ be a Fano manifold, and $D\in |-K_X|$ a smooth divisor. By computing the $\alpha$-invariant of the pair $(X,(1-\tau)D)$, Berman has showed in \cite[Theorem 5.1]{rber} that the pair $(X,(1-\tau)D)$ admits a (weak) Kähler-Einstein cone metric $\om_{\tau}$ (along $D$) as soon as the angle $\tau$ is small enough. Therefore, applying the last corollary, we deduce that those metrics have actually cone singularities along $D$. This is the first step of the program initiated by Donaldson \cite{Don} to solve the so called Yau-Tian-Donaldson conjecture.

\section{Metrics with prescribed Ricci curvature on geometric orbifolds}

We consider a function $f\in \cC^\infty (X)$ and $\lambda\in \{0, 1\}$.
For each $\varepsilon> 0$, we denote by $\ome$ the metric constructed in section \ref{sec2}, and we consider the following Monge-Amp\`ere equation 
$$(\omega_{\varepsilon}+ dd^c \varphi_\varepsilon)^n=
{e^{f+ \lambda(\varphi_\varepsilon+\psi_{\ep})}\over \prod_{j\in J}(\varepsilon^2+ |s_j|^2)^{1- \tau_j}}\omega^n\leqno(\star_{\ep})$$
It has a unique solution $\varphi_\varepsilon\in \cC^\infty(X)$ 
if $\lambda= 1$, and it has a unique solution $\varphi_\varepsilon\in \cC^\infty(X)$ such that 
$$\int_X\varphi_\varepsilon \, dV_{\omega}=0$$
if $\lambda= 0$, respectively (cf. \cite{Aubin}, \cite{Yau78}). \\

We are going to apply Theorem \textbf{A}, by 
choosing the data in $(\star_{\ep})$ 
according to the hypothesis of Theorem \textbf{C}.
At first we assume that we have 
\begin{equation}
\label{7}
c_1(K_X+ D)= \lambda \{\omega\}
\end{equation}
for some K\"ahler metric $\omega$ on $X$; as above, $\lambda$ is equal to zero 
or 1. Then there exists a function $f\in C^\infty(X)$ such that 
$$\Theta_{\det (\omega)}(K_X)+ \sum_{j\in J}(1- \tau_j)\Theta_{h_j}(L_j)
+ dd^c f= \lambda\omega$$
as a consequence of the relation \eqref{7}.
Equality $(\star_0)$ shows that we have
\begin{equation}
\label{8}
-\Ric({\omega_{\infty}})= \lambda \, \omega_{\infty}
\end{equation}
on $X_0$, and by Theorem \textbf{C},  we also know 
the behaviour of $\omega_{\infty}$
near the support of the divisor $D$.\\

\noindent We assume next that there exists a closed, semi-positive definite form $\alpha$ on $X$ such that
\begin{equation}
\label{9}
c_1(K_X+ D)= \lambda^\prime \{\alpha\};
\end{equation}
where $\lambda^\prime\in \{-1, 1\}$.
Then we consider a function $f\in C^\infty(X)$ such that 
$$\Theta_{\det (\omega)}(K_X)+ \sum_{j\in J}(1- \tau_j)\Theta_{h_j}(L_j)
+dd^c f= \lambda^\prime\alpha$$
 which we plug in the family of equations $(\star_\varepsilon)$ together with
 $\lambda= 0$.   
The metric $\omega_{\infty}$ given by Theorem \textbf{A} satisfies 
\begin{equation}
\label{10}
-\Ric({\omega_{\infty}})= \lambda^\prime\alpha
\end{equation}
on $X_0$.

\noindent In conclusion, under the hypothesis of Theorem \textbf{C},  we can construct 
a K\"ahler metric $\omega_\infty$ on $X_0$, such that:

\begin{enumerate}
\item The Ricci curvature of $\omega_\infty$ satisfies equality \eqref{8} if $c_1(K_X+ D)$ is strictly positive or zero.

\item The Ricci curvature of $\omega_\infty$ satisfies equality \eqref{10} if the Chern class $c_1(K_X+ D)$ is semi-positive or semi-negative.

\item In both cases above, there exists a positive constant $C$ such that 
$$C^{-1}\omega_{o}\le \omega_{\infty}\le 
C\omega_{o}$$ 
at each point of $X_0$, and where $\omega_o$ is a metric with cone singularities along $D$.

\end{enumerate}

\section{Holomorphic tensors and the \# operator}
\label{sec3}
\noindent Let $M$ be a complex manifold; classically, one defines the 
tensor fields which are contravariant of degree $r$ and covariant of degree $s$
as follows
\begin{equation}
\label{11}
T^r_sM:= \big(\otimes ^rT_M\big)\otimes \big(\otimes ^sT_M^\star\big).
\end{equation}
In our present context, we consider $M:= X_0$, that is to say the
Zariski open set $\displaystyle X\setminus \cup_jY_j$, and following \cite{Camp2}, we will be concerned with the tensor fields respecting the orbifold structure $(X, D)$. We first 
recall the definition of these objects as given in \cite{Camp2}, and then we will recast it in a more 
differential-geometric framework (which is better adapted for the computations in section \ref{sec4}). 

Let $x\in X$ be a point; since the hypersurfaces $(Y_j)$ have strictly normal intersections, there exist a small open set $\Omega\subset X$, together with a coordinate system $z= (z^1,\ldots, z^n)$ 
centered at $x$ such that $$Y_{j}\cap \Omega= (z^j= 0)$$ for $j= 1,\ldots, d$ and 
$$Y_j\cap \Omega= \emptyset$$ for the others indexes. Such a coordinate system will be called \emph{adapted} to the pair $(X, D)$ at $x$. 

We define the locally free sheaf $T^r_s(X|D)$ generated as an $\cO_X$-module by the tensors 
$$z^{\lceil (h_I- h_J)\cdot a\rceil}{\partial\over \partial z^{\otimes I}}\otimes dz^J$$
where the notations are as follows:

\begin{enumerate}

\item $I$ (resp. $J$) is a collection of positive integers in $\{1,\ldots, n\}$ of cardinal 
$r$ (resp. $s$) (we notice that we may have repetitions among the 
elements of $I$ and $J$, and we count each element according to its multiplicity).

\item For each $1\le i\le n$, we denote by 
$h_I(i)$ the multiplicity of $i$ as element of the collection $I$.

\item For each $i= 1,\ldots, d$ we have $a_i:= 1-\tau_i$, and
$a_i= 0$ for $i\ge d+ 1$.

\item We have 
$$z^{\lceil (h_I- h_J)\cdot a\rceil}:= \prod_i {(z^i)}^{\lceil (h_I(i)- h_J(i))\cdot 
a_i\rceil}$$

\item If $I= (i_1,\ldots, i_r)$, then we have
$${\partial\over \partial z^{\otimes I}}:= {\partial\over \partial z^{i_1}}\otimes \cdots
\otimes {\partial\over \partial z^{i_r}}$$
and we use similar notations for $dz^J$.

\end{enumerate}

\noindent Hence the holomorphic tensors we are considering here have
prescribed zeroes/poles near $X\setminus X_0$, according to the 
multiplicities of $D$. For reasons which will appear clearly in a moment, 
we have to consider a version of this definition, as follows.
\medskip

Let $g$ be any K\"ahler metric on $X_0$ with cone singularities along $D$:  
there exists a positive constant $C$ with the property that
\begin{equation}
\label{12}
C^{-1}\omega_o\le g\le C \, \omega_o.
\end{equation}
where we recall that $\omega_{o}$ is the metric with cone singularities along $D$ constructed in section \ref{sec2}.\\
The metric $g$ induces a metric on the vector bundle
$T^r_s(X_0)$ in a natural way; we introduce the following class of tensors on
$X_0$.

\begin{defi} Let $u$ be a smooth section of the bundle $T^r_s(X_0)$; we say that $u$ is bounded if there exists a constant $C> 0$ such that 
\begin{equation}
\label{13}
|u|_{g, x}^2\le C
\end{equation}
at each point $x$ of the Zariski open set $X_0$. We denote the space of bounded sections of $T^r_s(X_0)$ by 
$\cC^\infty_{\cB}\big(X_0, T^r_s(X_0)\big)$.

\end{defi}

\noindent We remark that the notion above is independent of the choice of the metric $g$, provided that the aforesaid metric satisfies the relation \eqref{12}.

\medskip

With respect to any coordinate system $z= (z^1,\ldots, z^n)$ we can write locally
$$g= {\sqrt {-1}\over 2}\sum_{\alpha, \beta}g_{\alpha \overline \beta }dz^\alpha\wedge 
d{z}^{\overline \beta}$$
and then $u$ can be expressed as follows
$$u= \sum_{I, J}u^I_J{\partial\over \partial z^{\otimes I}}\otimes dz^J; $$
the condition \eqref{13} becomes
\begin{equation}
\label{14}
\sum_{I, J, K, L}u^I_J\overline{u^K_L}g^{J\overline L}_{I\overline K} \, \le C.
\end{equation}
In inequality \eqref{14} above, we use the notations 
$$g^{J\overline L}_{I\overline K}:= g^{j_1\overline l_1}\ldots
g^{j_s\overline l_s}g_{i_1\overline k_1}\ldots g_{i_r\overline k_r}$$
and $(g^{p\overline q})$ is the inverse of the matrix 
$(g_{\alpha \overline \beta})$. 
\medskip

\noindent The following result explains the link between the global sections of 
the bundle $T^r_s(X|D)$ and the bounded sections of $T^r_s(X_0)$.

\begin{lemm}
\phantomsection
\label{lem32} 
Let $u$ be a smooth section of the bundle $T^r_s(X_0)$. Then 
$u$ corresponds to a holomorphic section of $T^r_s(X|D)$ if and only if 
$\dbar u= 0$ and $u\in \cC^\infty_{\cB}\big(X_0, T^r_s(X_0)\big)$. 
\end{lemm}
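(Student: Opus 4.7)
The statement is local along $\mathrm{Supp}(D)$, so I would fix a point $x$ in the deepest intersection, say $x \in \bigcap_{j=1}^{d} Y_j$, and choose an adapted coordinate system $(z^1, \ldots, z^n)$ on a small neighborhood $\Omega$ as in Lemma \ref{lem:coor}. Writing $a_i := 1 - \tau_i$, the chosen cone metric $g$ is quasi-isometric to the standard model $\omega_o$ (by definition), so it is enough to compute norms with respect to $\omega_o$. Since $\omega_o$ is diagonal in these coordinates, the induced metric on $T^r_s(X_0)$ is diagonal in the canonical basis $\{\partial/\partial z^{\otimes I}\otimes dz^J\}_{I,J}$ and a direct calculation gives
\[
\Bigl| \tfrac{\partial}{\partial z^{\otimes I}} \otimes dz^J \Bigr|^2_{\omega_o} \;=\; \prod_{i \le d} |z^i|^{2 a_i (h_J(i) - h_I(i))}.
\]
Expanding $u = \sum_{I,J} \tilde u^I_J \, \partial/\partial z^{\otimes I} \otimes dz^J$ with $\tilde u^I_J \in \cC^\infty(\Omega \cap X_0)$, one therefore obtains $|u|^2_{\omega_o} = \sum_{I,J} |\tilde u^I_J|^2 \prod_{i \le d} |z^i|^{2 a_i (h_J(i) - h_I(i))}$, a sum of non-negative terms which can be read component-wise.

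For the direct implication, suppose $u$ is locally a section of $T^r_s(X|D)$: then $\tilde u^I_J = v^I_J \cdot z^{\lceil (h_I - h_J)\cdot a\rceil}$ with $v^I_J \in \cO(\Omega)$; plainly $\bar\partial u = 0$, and each summand of $|u|^2_{\omega_o}$ equals $|v^I_J|^2 \prod_{i \le d} |z^i|^{2(\lceil x_i \rceil - x_i)}$ with $x_i := a_i (h_I(i) - h_J(i))$, which is locally bounded on $\Omega$ since $\lceil x\rceil - x \in [0, 1)$ for every real $x$. For the converse, assume $\bar\partial u = 0$ on $X_0$ and $|u|_g \le C$. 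The first condition makes every $\tilde u^I_J$ holomorphic on $\Omega \setminus \mathrm{Supp}(D)$, and the diagonal form of the norm gives the component-wise estimate
\[
|\tilde u^I_J(z)| \;\le\; C' \prod_{i \le d} |z^i|^{a_i (h_I(i) - h_J(i))}.
\]
Set $v^I_J := \tilde u^I_J / z^{\lceil (h_I - h_J)\cdot a\rceil}$, a holomorphic function on $\Omega \setminus \mathrm{Supp}(D)$ satisfying $|v^I_J(z)| \le C' \prod_{i \le d} |z^i|^{-\beta_i}$ with $\beta_i := \lceil a_i(h_I(i) - h_J(i)) \rceil - a_i(h_I(i) - h_J(i)) \in [0, 1)$.

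I would then introduce $w := v^I_J \cdot \prod_{i : \beta_i > 0} z^i$, which is holomorphic on $\Omega \setminus \mathrm{Supp}(D)$ and locally bounded near $\mathrm{Supp}(D)$; applying Riemann's extension theorem across each hypersurface $\{z^i = 0\}$ in turn, $w$ extends to an element of $\cO(\Omega)$. The sharper bound $|w(z)| \le C' \prod_{i : \beta_i > 0} |z^i|^{1 - \beta_i}$ with $1 - \beta_i > 0$ forces $w$ to vanish along each $\{z^i = 0\}$ with $\beta_i > 0$, so $w$ is divisible by $\prod_{i : \beta_i > 0} z^i$ in $\cO(\Omega)$, which shows that $v^I_J$ itself extends holomorphically to $\Omega$; this exhibits $u$ as a section of $T^r_s(X|D)$. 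The main obstacle is exactly this extension step: $v^I_J$ is a priori only bounded by $\prod |z^i|^{-\beta_i}$ and hence need not extend across $D$ on its own; what makes the argument work is the strict inequality $\beta_i < 1$, which is the raison d'être of the ceiling function $\lceil (h_I - h_J) \cdot a \rceil$ in the definition of $T^r_s(X|D)$, since it is the unique integer-valued rounding of $(h_I - h_J)\cdot a$ compatible with both holomorphicity of the generators and sharpness of the boundedness condition.
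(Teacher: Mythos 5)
Your proof is correct and follows essentially the same route as the paper: reduce to the diagonal model cone metric in adapted coordinates, read the boundedness condition component-wise, and exploit that the exponents $\lceil x\rceil - x$ lie in $[0,1)$. The only divergence is the final extension step: the paper extends $u^I_J/z^{\lceil (h_I-h_J)\cdot a\rceil}$ across $\mathrm{Supp}(D)$ directly by observing it is $L^2$, whereas you first clear the fractional negative powers by multiplying by $\prod_i z^i$, invoke the bounded Riemann extension theorem, and then divide back out using the forced vanishing along each branch --- an equally valid, slightly more elementary device.
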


\begin{proof} The direct implication is immediate: 
let $z$ be a coordinate system adapted to $(X, D)$ at some point; we assume that its components are defined on some open set $\Omega\subset X$. 
We remark 
that by relation \eqref{12} the following inequalities
\begin{equation}
\label{15}
{C^{-1}\over |z^\alpha|^{2(1- \tau_\alpha)}}\le 
g_{\alpha \overline \alpha}(z)\le {C\over |z^\alpha|^{2(1- \tau_\alpha)}},
\end{equation}

\begin{equation}
\label{16}
|g_{\alpha \overline \beta}(z)|\le {C\over |z^\alpha|^{(1- \tau_\alpha)}
|z^\beta|^{(1- \tau_\beta)}},
\end{equation}

as well as 
\begin{equation}
\label{17}
{C^{-1}|z^\alpha|^{2(1- \tau_\alpha)}}\le 
g^{\alpha \overline \alpha}(z)\le {C |z^\alpha|^{2(1- \tau_\alpha)}}
\end{equation}
and

\begin{equation}
\label{18}
|g^{\alpha \overline \beta}(z)|\le {C |z^\alpha|^{(1- \tau_\alpha)}
|z^\beta|^{(1- \tau_\beta)}}
\end{equation}
hold pointwise on $\Omega$, where $C> 0$ is a constant.\\

Let $u$ be a holomorphic section of the bundle $T^r_s(X|D)$; then by definition,
its coefficients $u^I_J$ satisfy the inequality
$$|u^I_J|^2\le {C |z|^{2\lceil (h_I- h_J)\cdot a\rceil}}
$$
and by inequalities \eqref{15}-\eqref{18} we have 
\begin{equation}
\label{19}
\sum_{I, J, K, L}u^I_J\overline{u^K_L}g^{J\overline L}_{I\overline K}
\le C\sum_{I, J, K, L}|z|^{\lceil (h_I- h_J)\cdot a\rceil}|z|^{\lceil (h_K- h_L)\cdot a\rceil}|z|^{(h_J- h_I+ h_L- h_K)\cdot a}.
\end{equation}
The last expression is clearly bounded
(as $\lceil a\rceil \ge a$ for any real number $a$), and since $\dbar u= 0$ on $X_0$, our 
direct implication is established.\\ 

\noindent We consider next a bounded, holomorphic section $u$ of the bundle $T^r_s(X_0)$. The ``bounded" hypothesis combined with the above relations \eqref{15}-\eqref{18}
shows in particular that we have
$$|u^I_J|^2g^{J\overline J}_{I\overline I}\le C$$
hence we infer the inequality
$${|u^I_J|^2\over |z|^{2\lceil (h_I- h_J)\cdot a\rceil}}\le C
|z|^{2(h_I- h_J)\cdot a- 2\lceil (h_I- h_J)\cdot a\rceil}.$$
Then the holomorphic function 
$$\displaystyle{u^I_J\over z^{\lceil (h_I- h_J)\cdot a\rceil}}$$
is $L^2$-integrable, so it 
extends across the support of the divisor $D$, hence the lemma is proved.
We remark that this kind of arguments are very useful in the 
orbifold context, see \cite{Camp1} for other applications.

\end{proof}

\medskip

\noindent We recall next the definition of the $\#$-operator, and show that the induced map
$$\#: \cC^\infty_{\cB}\big(X_0, T^r_s(X_0)\big)\to \cC^\infty_{\cB}\big(X_0, T^s_r(X_0)\big)$$
is well defined.

\noindent To this end, if $v$ is a (local) section of the bundle $T^1_0(X_0)$, then we have
$$\# v:= \sqrt {-1} \overline v\!\mathrel{\lrcorner}\!g$$
Actually, this operator induces a map (still denoted by $\#$) between 
the tensor bundles 
$$\# : T^r_s(X_0)\to T^s_r(X_0)$$
which we describe next in local coordinates.

\noindent Let $z= (z^1,\ldots, z^n)$ be a system of local coordinates, such that 
$$g= {\sqrt {-1}}\sum_{\alpha, \beta}g_{\alpha \overline \beta }dz^\alpha\wedge 
d{z}^{\overline \beta};$$ 
the vector $v$ can be expressed as
$$v= \sum_\alpha v^\alpha{\partial\over\partial z^\alpha}$$
and then we have
\begin{equation}
\label{20}
\#v:= \sum_{\alpha, \beta}\overline {v^\beta}g_{\alpha \overline\beta}dz^\alpha.
\end{equation}
In a similar fashion one can see that the $\#$-operator (or rather its inverse)
acts on $T^0_1(X_0)$ as follows: 
if $\rho= \sum_i \rho_idz^{i}$ is a local section of the bundle 
above, then we have
\begin{equation}
\label{21}
\# \rho= \sum_{p, i}\overline{\rho_p}g^{i\overline p}{\partial\over \partial z^i}.
\end{equation}

\noindent We extend the definition of $\#$ to any element of $T^r_s(X_0)$
as indicated here
$$\#\Big({\partial\over \partial z^{\otimes I}}\otimes dz^{\otimes J}\Big):=
\#(dz^{j_1})\otimes \cdots \otimes \#(dz^{j_s})\otimes \#\Big({\partial\over \partial z^{i_1}}\Big)
\otimes \cdots \otimes \#\Big({\partial\over \partial z^{i_r}}\Big)$$
and we recall next the following basic fact (which will nevertheless play an important role in what follows).

\begin{lemm}
\phantomsection
\label{lem33}
The operator $\#$ induces a well-defined map
$$\#: \cC^\infty_{\cB}\big(X_0, T^r_s(X_0)\big)\to \cC^\infty_{\cB}\big(X_0, T^s_r(X_0)\big)$$
between the spaces of bounded tensors.
\end{lemm}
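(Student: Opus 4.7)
The statement is essentially the assertion that $\#$ is a pointwise (antilinear) isometry between $T^r_s(X_0)$ and $T^s_r(X_0)$, both equipped with the natural metrics induced by $g$. Since the boundedness condition \eqref{13} is entirely pointwise, this observation immediately yields the lemma.

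First I would verify the claim in the two elementary cases. For a $(1,0)$-vector $v$, the local formula \eqref{20} combined with the definition of the inverse matrix $(g^{\alpha\bar\beta})$ and the Hermitian symmetry $g_{\alpha\bar\beta} = \overline{g_{\beta\bar\alpha}}$ gives
\[
|\#v|^2_g \;=\; \sum_{\alpha,\gamma} g^{\alpha\bar\gamma}(\#v)_\alpha \overline{(\#v)_\gamma}
\;=\; \sum g^{\alpha\bar\gamma} g_{\alpha\bar\beta}\,g_{\delta\bar\gamma}\, v^\delta \overline{v^\beta}
\;=\; \sum g_{\delta\bar\beta} v^\delta \overline{v^\beta} \;=\; |v|^2_g,
\]
after collapsing $\sum_\alpha g^{\alpha\bar\gamma} g_{\alpha\bar\beta} = \delta^{\bar\gamma}_{\bar\beta}$. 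The analogous computation based on \eqref{21} gives $|\#\rho|_g = |\rho|_g$ for any $(1,0)$-form $\rho$. A dual argument (or the observation that $\#\circ\#$ is the identity up to complex conjugation) handles antiholomorphic pieces similarly, but since the paper's tensor bundles $T^r_s(X_0)$ are built from $T_{X}$ and $T_{X}^{\star}$ only, the two cases above suffice.

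Because $\#$ is extended multiplicatively to pure tensor products (see the formula preceding Lemma \ref{lem33}), the isometry property carries over to arbitrary $u \in T^r_s(X_0)$. Concretely, in the local coordinates used in \eqref{14}, the squared norm of $\#u$ is obtained from that of $u$ by swapping the role of each upper and lower index; the four-fold contraction $g^{J\bar L}_{I\bar K}$ becomes $g^{I\bar K}_{J\bar L}$, and an iteration of the one-index collapse above shows that the resulting scalar is unchanged. Hence $|\#u|_g = |u|_g$ holds pointwise on $X_0$, and bounded tensors are sent to bounded tensors with the same sup-norm bound.

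There is no genuine analytic obstacle: the only thing to monitor is the bookkeeping of the complex conjugations introduced by the $\overline{v}$ in the definition of $\#$, which make the operator antilinear but leave the squared norm untouched. In particular, the estimate is completely independent of the cone behaviour of $g$ encoded in \eqref{15}--\eqref{18}; the cone geometry will only re-enter when one later uses Lemma \ref{lem33} to compare the $L^2$-norms of $u$ and $\#u$ integrated against $\omega_{\infty}^n$ in the Bochner-type argument of Theorem \textbf{B}.
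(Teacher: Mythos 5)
Your proposal is correct and follows exactly the paper's argument: the paper proves the lemma by noting that $\#$ is a pointwise isometry (citing the local expressions \eqref{20} and \eqref{21}), so that $\sup_{X_0}|\#u|_{g}=\sup_{X_0}|u|_{g}$ and boundedness is preserved. You merely carry out in detail the "direct check" that the paper leaves to the reader.
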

\begin{proof} Let $u$ be a bounded tensor; by definition, there exists a constant $C> 0$ such that 
$$\sup_{x\in X_0}|u|_{g, x}\le C< \infty$$
and it is well-known that the $\#$-operator is an isometry
(this can be checked directly, given the local expressions \eqref{20} and \eqref{21} above), hence
$$\sup_{x\in X_0}|\#(u)|_{g, x}= \sup_{x\in X_0}|u|_{g, x}\le C< + \infty$$
and the lemma is proved. 
\end{proof}

\begin{rema}{\rm The vanishing conditions we impose to the elements of
$T^r_s(X|D)$ are not preserved by the operator $\#$ in general; this is the main reason why we have to consider the class of bounded tensors. }
\end{rema}

\section{Bochner formula and cut-off procedure}
\label{sec4}
We consider here a K\"ahler metric $g$ on $X_0$, satisfying condition \eqref{12}; in other words, $g$ has the same order zero asymptotic
as the cone metric $\omega_o$
near the divisor $D$.  

Let $x\in X_0$ be an arbitrary point. Since $g$ is a K\"ahler metric, there exists a 
geodesic coordinate system $z= (z^1,\ldots , z^n)$ centered at $x$, such that if we write
\begin{equation}
\label{22}
g= \sqrt{-1}\sum_{i, j}g_{j \overline i}dz^j\wedge dz^{\overline i}
\end{equation}
then the coefficients in the expression \eqref{22} above have the following expansion up
to order 3:
$$g_{j \overline i}(z)= \delta_{ij}- \sum_{k, l}R_{j\overline i k\overline l}z^kz^{\overline l}+ \cO(|z|^3).$$

\noindent We recall here the following Bochner type formula; see
\cite{BY}, \cite[Lemma 14.2]{Dem95}.

\begin{lemm}
\phantomsection
\label{lem41}
Let $u$ be a tensor of $(r, s)$-type on $X_0$, with compact support in $X_0$. Then we have
$$\int_{X_0}|\overline\partial(\# u)|^2dV_g= \int_{X_0}|\overline\partial u|^2dV_g
+ \int_{X_0}\langle \cR(u), u\rangle dV_g,$$
where $\cR$ is an order zero operator, defined as follows. We write 
\begin{equation}
\label{23}
u= \sum_{I, J}u^I_J{\partial\over \partial z^{\otimes I}}\otimes dz^{\otimes J}
\end{equation}
and then we have

\begin{align*}
\cR(u):= & \sum_{I, J, p, l}u^I_JR_{i_p\overline l}
{\partial\over \partial z^{i_1}}\otimes \cdots \otimes
{\partial\over \partial z^{i_{p-1}}}\otimes
{\partial\over \partial z^{l}}\otimes
{\partial\over \partial z^{i_{p+1}}}\otimes \cdots \otimes
{\partial\over \partial z^{i_r}}\otimes dz^{\otimes J}-\\
- & \sum_{I, J, q, k}u^I_JR_{j_q\overline k}
{\partial\over \partial z^{\otimes I}}\otimes
dz^{j_1}\otimes \cdots \otimes dz^{j_{q-1}}\otimes
dz^{k}\otimes dz^{j_{q+1}}\otimes dz^{j_s}
\end{align*}
In the expression above, we use the notation
$$R_{j\overline i}= \sum_p R_{p\overline p j\overline i}$$
for the coefficients of the Ricci tensor (here all the quantities 
are expressed with respect to some geodesic coordinates), as well as 
$I= (i_1,\ldots, i_r)$ and $J= (j_1,\ldots, j_s)$. 
\end{lemm}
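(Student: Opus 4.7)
The plan is to follow the classical Bochner-type argument, exploiting the fact that $\#$ is a pointwise antilinear isometry between tensor bundles (Lemma \ref{lem33}). The key idea is that $\bar\partial$ applied to $\#u$ differentiates the \emph{conjugated} components of $u$, which at a geodesic origin effectively replaces a $\bar\partial$-derivative by a $\partial$-derivative; the lemma then reduces to the standard Kähler identity relating $\int|\partial u|^2$ and $\int|\bar\partial u|^2$ via a Ricci curvature term.

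\textbf{Step 1.} I would first establish the pointwise identity $|\bar\partial(\#u)|_g^2(x) = |\partial u|_g^2(x)$ on $X_0$. Fix $x\in X_0$ and choose a $g$-geodesic coordinate system centered at $x$; this is available since $g$ is smooth on $X_0$. In such coordinates $g_{j\bar i}(0)=\delta_{ij}$ and all first derivatives of $g$ vanish at the origin, so the components of $\#u$ at $x$ are simply the conjugated components of $u$ (with indices positioned according to the type swap $T^r_s\to T^s_r$). Applying $\bar\partial$, all corrections coming from $\bar\partial g$ vanish at $x$, leaving $\bar\partial$-derivatives of $\overline{u^I_J}$, which equal conjugates of $\partial$-derivatives of $u^I_J$. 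Since $\#$ is a pointwise isometry and the norm $|\bar\partial(\#u)|^2$ is intrinsic, this produces the claimed identity at $x$, and hence on all of $X_0$.

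\textbf{Step 2.} Since $u$ has compact support in the open set $X_0$ where $g$ is smooth, I may perform integration by parts without boundary terms:
\begin{equation*}
\int_{X_0} \bigl(|\partial u|^2 - |\bar\partial u|^2\bigr)\, dV_g \;=\; \int_{X_0}\langle (\partial^*\partial - \bar\partial^*\bar\partial)u,\, u\rangle\, dV_g.
\end{equation*}
The operator $\partial^*\partial - \bar\partial^*\bar\partial$ is then identified, via the Kähler Weitzenböck formula applied slot-by-slot, as the zeroth-order Ricci operator $\mathcal{R}$ appearing in the statement: each contravariant slot is contracted with $+R_{i_p\bar l}$ and each covariant slot with $-R_{j_q\bar k}$, the sign flip reflecting the duality between $T_X$ and $T_X^*$. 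Combining with Step 1 produces the claimed formula.

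\textbf{Main obstacle.} The delicate point is verifying the precise form of $\mathcal{R}$ with the correct index placements and signs. This is a careful application of the Ricci identity $\nabla_k\nabla_{\bar l}-\nabla_{\bar l}\nabla_k=R$ iterated on each tensor slot, essentially a multi-slot generalization of the scalar-valued Bochner formula in \cite{BY} and \cite[Lemma 14.2]{Dem95}; it is classical but requires meticulous bookkeeping. Importantly, no part of the argument interacts with the singular locus of $g$: the compact support assumption confines all computations to the smooth part of the metric, which is precisely why the cut-off procedure developed in the next section is needed to apply this lemma to global holomorphic tensors on $X$.
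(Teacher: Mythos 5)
Your proposal is correct and follows essentially the same route as the paper: a pointwise computation in geodesic coordinates, the isometry property of $\#$, integration by parts justified by compact support, and the Ricci term emerging from the second derivatives of the metric on the tensor bundle. The only (cosmetic) difference is the packaging — the paper computes $\overline\partial^\star\overline\partial(\# u)-\#\,\overline\partial^\star\overline\partial u=\#\cR(u)$ directly by differentiating the coefficients $g^{K\overline J}_{L\overline I}$ twice, whereas you route through the identity $|\overline\partial(\# u)|^2=|D'u|^2$ and the Bochner--Kodaira--Nakano commutation formula $D'^{\star}D'-\overline\partial^{\star}\overline\partial=\cR$ on $(0,0)$-forms with values in $T^r_s(X_0)$; both reduce to the same curvature bookkeeping.
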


\begin{proof} The arguments which will follow are very classical: we evaluate the quantity
$$\#\overline \partial^\star\dbar u- \overline \partial^\star\dbar (\# u)
$$
at some point $p$ of $X_0$; as we shall see, the preceding difference can be expressed
in terms of the operator $\cR$.

 To this end, we consider a geodesic coordinate system 
 $z= (z^1,\ldots, z^n)$ centered at $p$. If 
 $$w= \sum_{I, J, \alpha} w^I_{J, \overline\alpha}{\partial\over\partial z^{\otimes I}}
 \otimes {dz^{\otimes J}}\otimes dz^{\overline \alpha}$$
 is the local expression of a $(0, 1)$-form
 with values in $T^r_s(X_0)$, then we have
\begin{equation}
\label{24}
\dbar^\star \! w= -\sum_{I, J, \alpha} {\partial w^I_{J, \overline \alpha}\over \partial z^{\overline \alpha}}
 {\partial\over\partial z^{\otimes I}}
 \otimes {dz^{\otimes J}},
 \end{equation}
that is to say, the expression of the formal adjoint operator
$\dbar^\star$ at $p$ is precisely the same as the one corresponding to the 
flat metric in $\bC^n$. By using the notations in \eqref{23}, we have
\begin{equation}
\label{25}
\dbar u= \sum_{I, J}{\partial u^I_J\over \partial z^{\overline \alpha}}
{\partial\over \partial z^{\otimes I}}\otimes dz^{\otimes J}\otimes dz^{\overline \alpha}
\end{equation}
and by relation \eqref{24} we infer that we have
\begin{equation}
\label{26}
\dbar^\star \dbar u= -\sum_{I, J}{\partial^2u^I_J\over 
\partial z^{\alpha}\partial z^{\overline \alpha}}
{\partial\over \partial z^{\otimes I}}\otimes dz^{\otimes J}
\end{equation}
at $p$.

Next, given the preceding considerations/computations (cf. \eqref{20} and \eqref{21}), we have
$$\#\Big({\partial\over \partial z^{\otimes I}}\otimes dz^{\otimes J}\Big)=
\sum_{K, L}g^{K\overline J}_{L\overline I}{\partial\over \partial z^{\otimes K}}\otimes dz^{\otimes L}$$
for any pair of indexes $(I, J)$, hence we obtain
\begin{align*}
\#\dbar^\star \dbar u= &-\sum_{K, L}\sum_{I, J}\overline{{\partial^2u^I_J\over 
\partial z^{\alpha}\partial z^{\overline \alpha}}}g^{K\overline J}_{L\overline I}{\partial\over \partial z^{\otimes K}}\otimes dz^{\otimes L}=\\
= & -\sum_{I, J}\overline{{\partial^2u^I_J\over 
\partial z^{\alpha}\partial z^{\overline \alpha}}}{\partial\over \partial z^{\otimes J}}\otimes dz^{\otimes I}
\end{align*}
at the point $p$. \\

On the other hand, we have
$$\#u = \sum_{I, J}\sum_{K, L}\overline {u^I_J}g^{K\overline J}_{L\overline I}{\partial\over \partial z^{\otimes K}}\otimes dz^{\otimes L}$$
and it follows that
\begin{align*}
\dbar^\star \dbar (\#u)= & -\sum_{I, J}\sum_{K, L}\overline 
{{\partial^2u^I_J\over 
\partial z^{\alpha}\partial z^{\overline \alpha}}}g^{K\overline J}_{L\overline I}{\partial\over \partial z^{\otimes K}}\otimes dz^{\otimes L}- \\
- & \sum_{I, J}\sum_{K, L}\overline {u^I_J}
{\partial^2g^{K\overline J}_{L\overline I}\over 
\partial z^{\alpha}\partial z^{\overline \alpha}}{\partial\over \partial z^{\otimes K}}\otimes dz^{\otimes L};
\end{align*}
when evaluated at $p$, the first term of the right hand side
of the preceding equality is precisely $\#\dbar^\star \dbar u$; as for the second one,
it is equal to $\#\cR(u)$. 

In conclusion, we obtain
$$\overline \partial^\star\dbar (\# u)= \#\overline \partial^\star\dbar u+ \#\cR(u)
$$
at $p$. We consider the scalar product with $\# u$, and we use the fact that 
$\#$ is an isometry, together with the fact that $u$ has compact support (this enables us to
perform the integration by parts); the lemma is proved.
\end{proof}

\medskip

\noindent We start now the proof of Theorem \textbf{C}; we first discuss the point $(i)$, so let 
$$u\in H^0\big(X, T^r_s(X|D)\big)$$
be a holomorphic tensor field, where $r\ge 1+s$.
As mentioned in the introduction, we will 
have to use a cut-off function to the following purposes:

\begin{enumerate}  

\item We intend to show that $u=0$ by using the Bochner technique (Lemma \ref{lem33}), but the formula only applies to the compactly supported tensors.

\item We want the error term induced by the truncation procedure to converge to zero.\\

\end{enumerate}

\noindent Following \cite[Lemma 2.2]{bobot}, we define $\rho: X\to [-\infty, +\infty]$ by the formula
$$\rho(x):= \log\Big(\log {1\over \prod_j |s_j(x)|^2}\Big)$$
where we recall that $s_j$ is a section of the line bundle $L_j$, whose zero set is the 
hypersurface $Y_j$. \\

For each $\varepsilon> 0$, let $\displaystyle \Xi_{\varepsilon}: [0,+ \infty[\to [0, 1]$ be a smooth function which is 
equal to zero on the interval $\displaystyle [0, {1/\varepsilon}]$, and which is equal to 1
on the interval $\displaystyle [1+ {1/\varepsilon}, +\infty]$. One may for example define $\Xi_{\ep}(x)=\Xi_{1}(x-\frac{1}{\ep})$, so that
$$\sup_{\varepsilon> 0, t\in \bR_+}|\Xi_{\varepsilon}^\prime(t)|\le C < \infty,$$
and we define $\theta_\varepsilon: X\to [0, 1]$ by the expression
$$\theta_\varepsilon(x)= 1- \Xi_{\varepsilon}\big(\rho(x)\big).$$
\noindent We assume from the beginning that we have 
$$\prod_j|s_j|^2\le e^{-1}$$
at each point of $X$, and then it is clear that we have 
$$\theta_\varepsilon =1 \iff \prod_j|s_j|^2\ge e^{-e^{1/\varepsilon}}$$
and also
$$\theta_\varepsilon =0 \iff \prod_j|s_j|^2\le e^{-e^{1+ 1/\varepsilon}}.$$
We evaluate next the norm of the $(0, 1)$--form $\dbar \theta_\varepsilon$; we have
$$\dbar \theta_\varepsilon(x)=  \Xi_{\varepsilon}^\prime\big(\rho(x)\big)
{1\over \log {1\over \prod_j |s_j(x)|^2}}\sum_j{\langle s_j, D^\prime s_j\rangle
\over |s_j|^2}(x).$$
In the context of the part $(i)$ of Theorem \textbf{C} we have a K\"ahler-Einstein metric
$\omega_\infty$ on $X_0$, which satisfies inequality \eqref{12}; we obtain 
\begin{equation}
\label{27}
|\dbar \theta_\varepsilon|_{\omega_\infty}^2\le 
{C|\Xi_{\varepsilon}'(\rho)|^2\over \log^2 {1\over \prod_j |s_j|^2}}\sum_j{1\over |s_j|^{2\tau_j}}
\end{equation} 
at each point of $X_0$. Indeed, this is a consequence of the fact that the norm of the $(1, 1)$-form
$$\sqrt{-1}{\langle D^\prime s_j, D^\prime s_j\rangle
\over |s_j|^{2(1-\tau_j)}}$$
with respect to $\omega_\infty$ is bounded from above by a constant; again, we use the 
fact that the metric $\omega_\infty$
is assumed to have the properties at the end of section \ref{sec3}.\\

Let $\varepsilon> 0$ be a real number; we consider the tensor
$$u_\varepsilon:= \theta_\varepsilon u.$$
It has compact support, hence by Lemma \ref{lem41} we infer
\begin{equation}
\label{28}
\int_{X_0}|\overline\partial(\# u_\varepsilon)|^2dV_{\omega_\infty}= \int_{X_0}|\overline\partial u_\varepsilon |^2dV_{\omega_\infty}
+ \int_{X_0}\langle \cR(u_\varepsilon), u_\varepsilon\rangle dV_{\omega_\infty}
\end{equation}
\noindent The K\"ahler-Einstein condition reads as
$$R_{j\overline i}= -\delta_{ji},$$
and therefore the linear term $\displaystyle \langle \cR(u_\varepsilon), u_\varepsilon\rangle$ becomes simply $(s-r)|u_\varepsilon |^2$.

We show next that the term
$$\int_{X_0}|\overline\partial u_\varepsilon |^2dV_{\omega_\infty}$$
tends to zero as $\varepsilon\to 0$. Since $u$ is holomorphic, we have
$$\dbar u_\varepsilon= u\otimes \dbar \theta_\varepsilon;$$ 
we recall now that $u$ is a bounded tensor, in the sense given in section \ref{sec3}, so we have

\begin{equation}
\label{29}
|\dbar u_\varepsilon|^2\le C|\dbar \theta_\varepsilon|^2.
\end{equation}
By inequality \eqref{27} above we infer 
\begin{equation}
\label{30}
\int_{X_0}|\overline\partial u_\varepsilon |^2dV_{\omega_\infty}
\le C\int_{X_0}{|\Xi^\prime_\varepsilon (\rho)|^2\over \log^2 {1\over \prod_j |s_j|^2}}\sum_p{1\over |s_p|^{2\tau_p}}dV_{\omega_\infty}.
\end{equation}

\noindent As $\omega_\infty$ as cone singularities along $D$ (Theorem \textbf{A}), we have:
\begin{equation}
\label{31}
\int_{X_0}|\overline\partial u_\varepsilon |^2dV_{\omega_\infty}
\le C\int_{X_0}{|\Xi^\prime_\varepsilon (\rho)|^2\over \prod_j |s_j|^{2(1- \tau_j)}
\log^2 {1\over \prod_j |s_j|^2}}\sum_p{1\over |s_p|^{2\tau_p}}dV_{\omega}.
\end{equation}
for some constant $C> 0$ independent of $\varepsilon$; here we denote by 
$\omega$ a smooth hermitian metric on $X$. 
We remark that the support of the function $\displaystyle \Xi^\prime_\varepsilon (\rho)$ is contained in the set 
$$e^{-e^{1+1/\varepsilon}}\le \prod_j|s_j|^2\le e^{-e^{1/\varepsilon}}$$
so in particular we have
\begin{equation}
\label{32}
{|\Xi^\prime_\varepsilon (\rho)|^2\over 
\log^{1\over 2} {1\over \prod_j |s_j|^2}}\le Ce^{-{1\over 2\varepsilon}}.
\end{equation}
We also notice that for each index $p$ we have
$$\int_{X_0}{dV_{\omega}\over |s_p|^{2}\log^{3/2} {1\over \prod_j |s_j|^2}\prod_{j\neq p} 
|s_j|^{2(1- \tau_j)}}\le C \int_{X_0}{dV_{\omega}\over |s_p|^{2}\log^{3/2} {1\over |s_p|^2}\prod_{j\neq p} 
|s_j|^{2(1- \tau_j)}}
$$
and the last integral is convergent, given that the hypersurfaces 
$(Y_j)$ have strictly normal intersections. \\ 

We combine the inequalities \eqref{31}-\eqref{32}, and we get
\begin{equation}
\label{33}
\int_{X_0}|\overline\partial u_\varepsilon |^2dV_{\omega_\infty}
\le Ce^{-{1\over 2\varepsilon}}.
\end{equation}
As we can see, the relation \eqref{33} combined with the fact that
$$\langle \cR(u_\varepsilon), u_\varepsilon\rangle= (s- r)|u|^2$$ will give a contradiction if $u$ is not identically zero (we recall that by hypothesis 
we have $r\ge s+ 1$).

\medskip
\noindent The point $(ii)$ of Theorem \textbf{C} is treated in a similar manner, except that 
$X_0$ may not admit a K\"ahler-Einstein metric. Nevertheless 
for any compact set $K\subset X_0$, the Ricci curvature of $\omega_\infty$ is greater than a small, positive multiple of 
$\omega_\infty$ on $K$ (cf. equality \eqref{10}), hence we get the result.

\medskip
\noindent We explain next our arguments for the point $(iii)$ of Theorem \textbf{C}; we only discuss the case
$$c_1(K_X+ D)\le 0.$$
The term 
\begin{equation}
\label{34}
\int_{X_0}\langle \cR(u_\varepsilon), u_\varepsilon\rangle dV_{\omega_\infty}
\end{equation}
is negative or zero, for any $\varepsilon> 0$ (here $u$ is a holomorphic 
tensor of type $(r, 0)$). Inequality \eqref{33} still holds;
in conclusion, we have 
\begin{equation}
\label{35}
\int_{X_0}|\overline\partial(\# u_\varepsilon)|^2dV_{\omega_\infty}\to 0,
\end{equation}
as $\varepsilon\to 0$. We remark next that we have
\begin{equation}
\label{36}
\int_{X_0}|\overline\partial \theta_\varepsilon\otimes \# u|^2dV_{\omega_\infty}\to 0,
\end{equation}
since the tensor $\# u$ is bounded 
(as we see thanks to Lemma \ref{lem33}), so the inequality \eqref{33} gives
\begin{equation}
\label{37}
\int_{X_0}\theta_\varepsilon|\overline\partial (\# u)|^2dV_{\omega_\infty}\to 0
\end{equation}
as $\varepsilon\to 0$. But this means that the tensor $\# u$
is holomorphic on $X_0$; in other words, we have $D^\prime u= 0$
at each point of $X_0$. By Lemma \ref{lem32}, we equally infer that we have
$$\# u\in H^0\big(X, T_r(X|D)\big).$$ 
\noindent In conclusion, the function
$$x\to |u(x)|_{\om_{\infty}}^2$$
is constant, henceforth $u$ is parallel with respect to
$\omega_\infty$. The proof of Theorem \textbf{C} is finished.

\begin{rema}If we are trying to implement the cut-off 
procedure described here by considering the "standard" function
$$\theta_\varepsilon:= \theta\Big({1\over \varepsilon^2}{\prod_j |s_j|^2}\Big)$$ 
then the boundary term 
\begin{equation}
\label{38}
\int_{X_0}|\overline\partial \theta_\varepsilon\otimes u|^2dV_{\omega_\infty}
\end{equation}
does not seem to tend to zero, as $\varepsilon\to 0$. Indeed, even if the support of 
$D= (1- \tau) Y$ is irreducible, we have
$$|\dbar \theta_\varepsilon|^2_{\omega_\infty} \le \theta^\prime(|s|^2/\varepsilon^2)^2{|s|^{4-2\tau}\over \varepsilon^4}$$
and the above quantity integrated against the measure
$${1\over |s|^{2(1- \tau)}}dV_\omega$$
is bounded, but it does not converges to zero as $\varepsilon\to 0$.
\end{rema}

\section{Further comments}
A question which seems to be very interesting to us \---at least from the point of view of the analysis which is required\--- would be to formulate and prove an 
analog result in the log-canonical case, i.e. prove Theorem \textbf{A} for a divisor $D$ having the following shape
$$D= \sum_{j\in J_1}Y_j+ \sum_{j\in J_2}(1-\tau_j)Y_j$$
where as always the hypersurfaces $(Y_j)$ are smooth, and have strictly normal intersections.\\

Another problem would be to consider the case where the representative of the class $c_1(K_X+ D)$ is singular. For example, we assume that $(X, D)$ is klt, and that $K_X+ D$ is
big. Then the analogue of the K\"ahler-Einstein metric in this context
is constructed e.g. in \cite{BEGZ}, but it is not clear how this object 
can be used in order to obtain some geometric consequences as in Theorem \textbf{A}.

\bibliographystyle{smfalpha}
\bibliography{Biblio.bib}
\vspace{3mm}

\end{document}